\newtheorem{theorem}{Theorem}[section] 
\newtheorem{prop}[theorem]{Proposition}
\newtheorem{cor}[theorem]{Corollary}
\theoremstyle{plain}
\newtheorem{example}{Example}
\theoremstyle{definition}
\newtheorem{definition}{Definition}[section] 
\theoremstyle{remark}
\newtheorem{remark}{Remark}[section]
\newcommand{\maps}{\longrightarrow}
\newcommand{\Vars}{\mathcal{P}}
\newcommand{\Frm}{\mathsf{For}}
\newcommand{\LogL}{\mathsf{L}}
\newcommand{\LogP}{\mathsf{P}}
\newcommand{\zero}{\mathbf{0}}
\newcommand{\one}{\mathbf{1}}
\newcommand{\ruleR}{\mathsf{r}}
\newcommand{\Rules}{\mathsf{R}}
\newcommand{\bydef}{:=}
\newcommand{\means}{\leftrightharpoons}
\newcommand{\Heyt}{\mathcal{H}}
\newcommand{\Br}{\mathcal{B}}
\newcommand{\CPC}{\mathsf{CPC}}
\newcommand{\IPC}{\mathsf{IPC}}
\newcommand{\SF}{\mathsf{S5}}
\newcommand{\Ch}{\mathcal{C}}
\newcommand{\KC}{\mathcal{KC}}
\newcommand{\Int}{\mathbf{Int}}
\newcommand{\Var}[1]{\mathcal{#1}}
\newcommand{\var}{\Var{V}}
\newcommand{\DS}{\mathsf{S}}
\newcommand{\Alg}[1]{\mathbf{#1}}
\newcommand{\alg}[1]{\mathbf{#1}}
\newcommand{\el}[1]{\mathsf{#1}}
\newcommand{\F}{\mathsf{F}}
\newcommand{\Ladm}{\ \raisebox{2pt}{\scaleobj{.7}{|\!\!\!\sim}}_\LogL \ }
\newcommand{\Padm}{\ \raisebox{2pt}{\scaleobj{.7}{|\!\!\!\sim}}_{\LogP} \ }
\newcommand{\NLadm}{\ \raisebox{2pt}{\scaleobj{.7}{\not{|\!\!\!\sim}}}_\LogL \ }
\newcommand{\NPadm}{\ \raisebox{2pt}{\scaleobj{.7}{\not{|\!\!\!\sim}}}_{\LogP} \ }
\newcommand{\Bemb}{\rightarrowtail^+}
\newcommand{\set}[2]{\{#1 \! \mid \! #2 \}}
\newcommand{\Ext}[1]{\mathsf{Ext}#1}
\newcommand{\conc}[2]{[#1]_{#2}}
\title{Admissibility in Positive Logics}
\author{Alex Citkin}
\address{Metropolitan Telecommunications}
\email{acitkin@gmail.com}
\begin{document}

\begin{abstract}
The paper studies admissibility of multiple-conclusion rules in the positive logics. Using modification of a method used by M.~Wajsberg in the proof of the separation theorem, it is shown that the problem of admissibility in positive logics is equivalent to the problem of admissibility in intermediate logics defined by positive additional axioms.

\smallskip
\noindent \textbf{Keywords.}
Inference rule, multiple-conclusion rule, admissible rule, positive logic, intermediate logic, Brouwerian algebra.

\end{abstract}

\maketitle


\section{Introduction}

The notion of admissible rule evolved from the notion of auxiliary rule: if a formula $B$ can be derived from a set of formulas $A_1,\dots,A_n$ in a given calculus (deductive system) $\DS$, one can shorten derivations by using a rule $A_1,\dots,A_n/B$. The application of such a rule does not extend the set of theorems, i.e. such a rule is admissible (permissible). In \cite[p.19]{Lorenzen_Book_1955} P.~Lorenzen called the rules not extending the class of the theorems "zul\"assing," and the latter term was translated as "admissible," the term we are using nowadays. In \cite{Lorenzen_Protologik_1956} Lorenzen also linked the admissibility of a rule to existence of an elimination procedure. 

Independently, P.S.~Novikov, in his lectures on mathematical logic, had introduced the notion of derived rule: a rule $\mathcal{A}_1,\dots,\mathcal{A}_n/\mathcal{B}$, where $\mathcal{A}_1,\dots,\mathcal{A}_n,\mathcal{B}$ are variable formulas of some type, is derived in a calculus $\DS$ if $\vdash_\DS \mathcal{B}$ holds every time when $\vdash_\DS \mathcal{A}_1,\dots,\vdash_\DS \mathcal{A}_n$ hold (see \cite[p. 30]{Novikov_Book}\footnote{This book was published in 1977, but it is based on the notes of a course that P.S.~Novikov taught in 1950th; A.V.~Kuznetsov was recalling that P.S.~Novikov had used the notion of derivable rule much earlier, in this lectures in 1940th.}). And he distinguished between two types of derived rules: a derived rule is strong, if $\vdash_\DS \mathcal{A}_1 \to (\mathcal{A}_2 \to \dots (\mathcal{A}_n \to \mathcal{B}) \dots)$ holds, otherwise a derived rule is weak.       

For classical propositional calculus ($\CPC$), the use of admissible rules is merely a matter of convenience, for every admissible for $\CPC$ rule $A_1,\dots,A_n/B$ is derivable, that is $A_1,\dots,A_n \vdash_\CPC B$ (see, for instance \cite{Belnap_et_Strengthening_1963}). It was observed by R.~Harrop in \cite{Harrop_Concerning_1960} that the rule $\neg p \to (q \lor r)/(\neg p \to q) \lor (\neg p \to r)$ is admissible for the intuitionistic propositional calculus ($\IPC$), but is not derivable in $\IPC$. Later, in mid 1960s, A.V.~Kuznetsov observed that the rule $(\neg\neg p \to p) \to (p \lor \neg p)/((\neg\neg p \to p) \to \neg p) \lor ((\neg\neg p \to p) \to \neg\neg p)$ is also admissible for $\IPC$, but not derivable. Another example of an admissible for $\IPC$ not derivable rule was found in 1971 by G.~Mints (see \cite{Mints1971}). Moreover, it was observed in \cite{Citkin1977} that there is an infinite set of independent rules admissible and not derivable in $\IPC$. And it was established by V.~Rybakov (see \cite{Rybakov_Criterion_Adm_1984,Rybakov_Bases_Adm_1985}) that there is no finite basis of admissible in $\IPC$ (and $\SF$) rules, i.e. not all admissible in $\IPC$ rules can be derived from any given finite set of admissible in $\IPC$ rules.

Naturally, the question about admissibility of rules in the extensions of $\IPC$ and their fragments arose. And in \cite{Prucnal_On_Structural_1972} T.~Prucnal had proven that $\to$-fragment of any superintuitionistic logic, is structurally complete, i.e. these fragments do not have admissible not-derivable rules. The Prucnal's proof method can be extended to any $\lor$-free fragment of any superintuitionistic.

At about the same time the author observed \cite{Citkin_Positive_1988} that admissible rules of the positive logics can be reduced to rules of some superintuitionistic logics. Namely, it was observed that a rule $\ruleR \bydef A/B$, where $\neg$ does not occur in formulas $A$ and $B$,  is admissible in a positive logic $\LogP$ if and only if $\ruleR$ is admissible in the least superintuitionistic logic $\LogL$ having $\LogP$ as its positive fragment. The admissibility in various positive and paraconsistent logics was studied in details in \cite{Odintsov_Rybakov_Unification_2013}. 

In the review \cite{Kracht_Review_1999} of book \cite{Rybakov_Book}, M.~Kracht suggested to study admissibility of multiple-conclusion rules: a rule $A_1,\dots,A_n/B_1,\dots,B_n$ is admissible in a logic $\LogL$ if every substitution that makes all the premises valid in $\LogL$, makes at least one conclusion valid in $\LogL$ (see also \cite{Kracht_Modal_2007}). A natural example of multiple-conclusion rule (called m-rule for short) admissible in $\IPC$ is the rule representing the disjunction property: $A \lor B/A,B$. That is, if a formula $A \lor B$ is valid in $\IPC$, then at least one of the formulas $A,B$ is valid in $\IPC$. The bases of admissibile m-rules for a variety of superintuitionistic and normal modal logics were constructed in \cite{Jerabek_Independent_2008,Jerabek_Canonical_2009,Goudsmit_Iemhoff_Unification_2014,Goudsmit_PhD}. In \cite{Cintula_Metcalfe_Admissible_2010} the admissibility in $\to,\neg$-fragment of $\IPC$ has been studied. 

In the present paper we consider admissibility of m-rules in positive logics, which are precisely (see Corollary \ref{corposf}) the positive fragments of superintuitionistic logics, and we show (Theorem \ref{thfollow}) that admissibility of any positive m-rule in a given positive logic $\LogP$ can be reduced to admissibility of this rule for  superintuitionistic logic $\Int + \LogP$.

\section{Background}

\subsection{Logics}

The (propositional) formulas are built in a regular way from a countable set $\Vars$ of (propositional) variables and connectives $\land, \lor,\to, \zero$. The set of all formulas is denoted by $\Frm$. By $\Frm^+$ we denote the subset of all formulas not containing $\zero$ and we call these formulas "\emph{positive}". If $A$ is a formula, $\pi(A)$ is a set of all variables occurring in $A$. 

We use $\Sigma$ to denote the set of all substitutions on $\Frm$, that is, $\Sigma$ is a set of all mappings $\sigma: \Vars \longrightarrow \Frm$, while $\Sigma^+$ denotes a set of all \emph{positive substitutions}, that is,  the set of all mappings $\sigma^+: \Vars \longrightarrow \Frm^+$.

\textit{Superintuitionistic logics} (si-logics for short) are understood as sets of formulas containing all theorems of intuitionistic propositional logic (denoted by $\Int$) and closed under rules modus ponens and substitution. The set of all si-logics is denoted by $\Ext{\Int}$ and forms a complete lattice with the least element $\Int$ and the unit $\Frm$. 

If $\LogL$ is an si-logic, by $\LogL^+$ we denote a \textit{positive fragment} of $\LogL$, that is, $\LogL^+ \bydef \LogL \cap \Frm^+$. 

\textit{Positive (superintuitionistic) logics} are understood as sets of positive formulas extending $\Int^+$ and closed under rules modus ponens and positive substitutions. The set $\Ext{\Int^+}$ of all positive logics also forms a complete lattice. Clearly, positive fragment of each si-logic is a positive logic, and 
\begin{equation}
\varphi: \LogL \maps \LogL^+ \label{posmap}
\end{equation}
is a homomorphism of $\Ext{\Int}$ to $\Ext{\Int^+}$. 

For any set of formulas (of positive formulas) $\Gamma$, by $\Int + \Gamma$ (or by $\Int^+ + \Gamma$) we denote the least si-logic containing $\Int \cup \Gamma$ (the least positive logic containing $\Int^+ \cup \Gamma$).

For us, the following relations between si- and positive logics are important:
\begin{itemize}
\item[(a)] For any set $\Gamma$ of positive formulas
\begin{equation}
(\Int + \Gamma)^+ = \Int^+ + \Gamma \label{posfrag}
\end{equation}
(see Corollary \ref{corsep});
\item[(b)] $\varphi$ from \eqref{posmap} maps $\Ext{\Int}$ onto $\Ext{\Int^+}$, that is, every positive logic is a positive fragment of some si-logic (see {\cite{Verhozina_Intermediate_1978}[Proposition 1]}]);

\item[(c)] For every positive logic $\LogP$ there is the least si-logic that has $\LogP$ as its positive fragment, namely, $\Int + \LogP$ is such a logic (comp. \cite{Verhozina_Intermediate_1978}[Corollary p.16]). 
\end{itemize}
Let us note that (a) entails (b) and (c) if we take $\Gamma = \LogP$ and recall that $\Int^+ \subseteq \LogP$:
\[
(\Int + \LogP)^+ = \Int^+ + \LogP = \LogP; 
\] 
and, obviously, $\Int + \LogP$ is the least si-logic containing $\LogP$.

\subsection{Admissible Rules}

An ordered pair $\Gamma/\Delta$ of finite sets of formulas $\Gamma,\Delta \subseteq \Frm$ is called a \textit{multiple-conclusion rule} (m-rule for short). If sets $\Gamma,\Delta$ consist only of positive formulas, the m-rule $\Gamma/\Delta$ is called a \textit{positive m-rule}. The formulas from $\Gamma$ are \textit{premises} of the rule, while the formulas from $\Delta$ are \textit{conclusions} of the rule.

Formula $A$ is said to be \textit{unifiable in a logic} $\LogL$ ($\LogL$-unifiable for short), if there is a substitution $\sigma \in \Sigma$, called $\LogL$-\textit{unifier of} $A$, application of which makes $A$ a theorem, that is, such  that $\sigma(A) \in \LogL$. Respectively, a positive formula $A$ is unifiable in a positive logic $\LogP$ if there is a $\LogP$-unifier, that is, there is a positive substitution $\sigma^+$ such that $\sigma^+(A) \in \LogP$.

A finite set of formulas (of positive formula) $\Gamma$ is $\LogL$-unifiable ($\LogP$-unifiable) if there is a substitution (a positive substitution) that unifies all formulas from $\Gamma$. 

\begin{definition} Let $\LogL$ be a logic. An m-rule (a positive m-rule) $\Gamma/\Delta$ is \textit{admissible} for $\LogL$ (respectively, for $\LogP$), if every substitution that unifies $\Gamma$ unifies at least one formula from $\Delta$. Respectively, if $\LogP$ is a positive logic, $\Gamma/\Delta$ is admissible for $\LogP$ if every $\LogP$-unifier of $\Gamma$ unifies at least one formula from $\Delta$. If $\Gamma = \varnothing$ every substitution unifies $\Gamma$. And the rule $\varnothing/\varnothing$ is admissible only in the \textit{inconsistent logic} $\Frm$. 
\end{definition}

If an m-rule $\ruleR$ is admissible for a logic $\LogL$, we denote this by $\Ladm \ruleR$. If a positive m-rule $\ruleR$ is admissible for a positive logic $\LogP$ we denote this by $\Padm \ruleR$.

Let us note that $\Ladm \varnothing/\Delta$ means that $\Delta \cap \LogL \neq\varnothing $, while $\Ladm \Gamma/\varnothing$ means that $\Gamma \neq \varnothing$ and $\Gamma$ is not $\LogL$-unifiable set of formulas. The rule $\varnothing/\varnothing$ is not admissible in any logic.

\begin{example} Rule $(p \to q) \to (p \lor r)/((p \to q) \to p) \lor ((p \to q) \to r)$ is admissible for $\Int$  (see \cite{Mints1971}), and, hence, is admissible for $\Int^+$.
\end{example}

\subsection{Algebras}

In this section we consider the algebraic models for superintuitionistic and positive logics.

The algebraic models for positive logics are Brouwerian algebras\footnote{There are different names used for Brouwerian algebras: lattices with relative pseudo-complementation  (see e.g. \cite{Birkhoff}), generalized Brouwerian algebras (see e.g. \cite{Monteiro_Axioms_1955}), implicative lattices (see e.g. \cite{Jankov_Calculus_1968,Odintsov_Constructive_2008}).}. A \textit{Brouwerian algebra} (see e.g. \cite{Kohler_Varieties_1975}) is an algebra $(\alg{A},\land,\lor,\to, \one)$ in which $(\alg{A},\land,\lor)$  is a distributive lattice with unit $\one$ and relative pseudo-complementation $\to$. Class $\Br$ of all Brouwerian algebras forms a variety. 

The algebraic models for si-logics are Heyting algebras. A \textit{Heyting algebra} is an algebra $(\alg{A},\land,\lor,\to, \one, \zero)$, where $(\alg{A},\land,\lor,\to, \one)$ is a Brouwerian algebra and $\zero$ is the least element, that is the identity $\zero \to x \approx \one$ holds. By $\Heyt$ we denote the variety of all Heyting algebras.

Map $\nu: \Vars \maps \Alg{A}$ is called a \textit{valuation} in a given algebra $\Alg{A}$. In a natural way, $\nu$ can be extended to map $\nu: \Frm \maps \Alg{A}$. If $A \in \Frm$ and $\nu(A) = \one_\Alg{A}$ we say that $\nu$ \textit{validates} $A$ in $\Alg{A}$, otherwise we say that $\nu$ \textit{refutes} $A$ in $\Alg{A}$.

As usual, if $\Alg{A}$ is an algebra and $A$ is a formula, $\Alg{A} \models A$ means that formula $A$ is \textit{valid in algebra} $\Alg{A}$, that is, every valuation in $\Alg{A}$ validates $A$.

If $\ruleR \bydef \Gamma/\Delta$ is an m-rule, we say that $\ruleR$ is \textit{valid in an algebra}$\Alg{A}$ (in symbols, $\Alg{A} \models \ruleR$), if every valuation that validates in $\Alg{A}$ all premises, validates at the same time at least one of conclusions. If $\Gamma = \varnothing$, every valuation validates $\Gamma$. On the other hand, if $\Delta = \varnothing$, neither valuation validates $\Delta$. Thus, the rule $\varnothing/\varnothing$ is not valid in any algebra, while the rule $\zero/\varnothing$ is valid in any non-degenerate (that is, having more than one element) algebra.

If $\Rules$ is a set of m-rules and $\Alg{A}$ is an algebra, $\Alg{A} \models \Rules$ means that every m-rule from $\Rules$ is valid in $\Alg{A}$. On the other hand, if $\Var{S}$ is a set of algebras and $\ruleR$ is an m-rule, $\Var{S} \models \ruleR$ means that $\ruleR$ is valid in every algebra from $\Var{S}$. 

Each si-logic $\LogL$ (or each positive logic $\LogP$) has a \textit{corresponding variety} $\var_\LogL$ (or $\var_\LogP$) of Heyting (or Brouwerian) algebras:
\[
\var_\LogL \bydef \set{\Alg{A} \in \Heyt}{\Alg{A} \models A, \text{ for every } A \in \LogL}
\]
or
\[
\var_\LogP \bydef \set{\Alg{A} \in \Br}{\Alg{A} \models A, \text{ for every } A \in \LogP}.
\]

For a Heyting algebra $\Alg{A}$ by $\Alg{A}^+$ we denote a \textit{Brouwerian reduct} of $\Alg{A}$, that is, $\{\land,\lor,\to,\one\}$-reduct of $\Alg{A}$. A Heyting algebra $\Alg{A}$ is \textit{B-embedded} in a Heyting algebra $\Alg{B}$ (in written $\Alg{A} \Bemb \Alg{B}$) if $\Alg{A}^+$ is embedded into $\Alg{B}^+$ as a Brouwerian algebra. If $\var$ is a variety of Heyting algebras and $\Alg{A}$ is B-embedded in an algebra from $\var$, we denote this by $\Alg{A} \in^+ \var$.

\subsection{Congruences} Let us recall (see e.g. \cite{Rasiowa_Algebraic_1974}) that a non-void set  $\F$ of elements of a given Brouwerian algebra $\Alg{A}$ (or a given Heyting algebra) is a \textit{filter} of $\Alg{A}$ if $\one \in \F$ and $\alg{a},\alg{a} \to \alg{b} \in \F$ yields $\alg{b} \in \F$ for all elements $\alg{a},\alg{b} \in \Alg{A}$. It is not hard to demonstrate that a meet of an arbitrary set of filters of a given algebra is a filter. If $\Alg{A}$ is a Brouwerian algebra (a Heyting algebra) and $\el{A} \subseteq \Alg{A}$ there is the least filter of $\Alg{A}$ (denoted by $[\el{A})$) containing $\el{A}$, and $\el{A}$ is said to be a filter \textit{generated} by elements $\el{A}$. There is a close connection between filters and congruences: every filter $\F$ of a given Brouwerian algebra (or Heyting algebra) defines a congruence that we denote $\theta(\F)$
\[
(\alg{a},\alg{b}) \in \theta_\F \text{ if and only if } \alg{a} \to \alg{b}, \alg{b} \to \alg{a} \in \F.
\] 
On the other hand, every congruence $\theta$ of a given Brouwerian algebra (Heyting algebra) $\Alg{A}$ defines a filer $\F_\theta$:
\[
\F_\theta \bydef \set{\alg{a} \in \Alg{A}}{(\alg{a},\one) \in \theta}.
\]
And the following holds:
\[
\theta(\F_\theta) = \theta \text{ and } \F_{\theta(\F)} = \F.
\]

Since every filter $\F$ of an algebra $\Alg{A}$ defines a congruence $\theta(\F)$, we use $\Alg{A}/\F$ to denote quotient algebra $\Alg{A}/\theta(\F)$.

Let us also observe that for each homomorphism $\varphi: \Alg{A} \maps \Alg{B}$ the set $\set{\alg{a} \in \Alg{A}}{\varphi(\alg{a}) = \one_\Alg{B}}$ forms a filter of $\Alg{A}$ that we denote $\F_\varphi$ and
\begin{equation}
\Alg{A}/\F_\varphi \cong \Alg{B}. \label{filterhom}
\end{equation}

Given an algebra $\Alg{A}$ and a congruence $\theta$ of $\Alg{A}$ by $\conc{\alg{a}}{\theta}$ we denote a congruence class containing element $\alg{a}$, that is, 
\[
\conc{\alg{a}}{\theta} \bydef \set{\alg{b} \in \Alg{A}}{(\alg{a},\alg{b}) \in \theta} 
\]
If $\varphi$ is a homomorphism of $\Alg{A}$, $\conc{\alg{a}}{\varphi}$ denotes the congruence class $\conc{\alg{a}}{\theta}$, where $\theta$ is a kernel congruence of $\varphi$ (that is, such a congruence that $\Alg{A}/\theta \cong \varphi(\Alg{A})$).

Since filters, and hence congruences, of Heyting algebra are defined by operation $\to$ which Heyting algebras share with their Brouwerian subreducts, the congruences of Heyting algebras and their Brouwerian subreducts are very well coordinated. Specifically, the following holds.

\begin{prop} \label{prhom} Let $\Alg{A}$ be a Heyting algebra and $\tilde{\Alg{A}}$ be a Brouwerian subalgebra of $\Alg{A}^+$ (Brouwerian subreduct of $\Alg{A}$). Then the following holds:
\begin{itemize}
\item[(a)] if $\varphi: \Alg{A} \maps \Alg{B}$ is a homomorphism, then $\varphi(\tilde{\Alg{A}})$ is a Brouwerian subreduct of $\Alg{B}$; 
\item[(b)] if $\F$ is a filter of $\tilde{\Alg{A}}$, then
\begin{equation}
\F = \tilde{\Alg{A}} \cap [\F), \label{prhom_eq1}
\end{equation}
i.e. the filter of $\Alg{A}$ generated by elements of $\F$ does not contain elements of $\tilde{\Alg{A}}$ not belonging to $\F$.
\end{itemize}
\end{prop}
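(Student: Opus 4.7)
For part (a) the plan is just to unwind definitions: the Heyting homomorphism $\varphi:\Alg{A}\maps\Alg{B}$ preserves $\land,\lor,\to,\one$, so its restriction to $\tilde{\Alg{A}}$ is a Brouwerian homomorphism into $\Alg{B}^+$, and the image $\varphi(\tilde{\Alg{A}})$ is closed under those operations in $\Alg{B}^+$, hence is a Brouwerian subreduct of $\Alg{B}$. I do not expect any difficulty here.

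For part (b), the only substantive direction is $\tilde{\Alg{A}} \cap [\F) \subseteq \F$, since the reverse containment follows at once from $\F \subseteq \tilde{\Alg{A}}$ and $\F \subseteq [\F)$. The plan is to describe $[\F)$ concretely and then check that nothing new from $\tilde{\Alg{A}}$ sneaks in. First I would observe that a subset of a Brouwerian algebra containing $\one$ and closed under modus ponens is automatically a lattice filter: upward closure follows because $\alg{a}\leq\alg{b}$ gives $\alg{a}\to\alg{b}=\one\in\F$, and closure under binary meets follows from the identity $\alg{a}\to(\alg{b}\to(\alg{a}\land\alg{b}))=\one$ via two applications of modus ponens. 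Consequently $\F$ itself is closed under finite meets, so that $[\F)=\set{\alg{b}\in\Alg{A}}{\alg{b}\geq \alg{f} \text{ for some }\alg{f}\in\F}$.

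Given $\alg{a}\in\tilde{\Alg{A}}\cap[\F)$, pick $\alg{f}\in\F$ with $\alg{a}\geq\alg{f}$ in $\Alg{A}$, i.e.\ $\alg{f}\to\alg{a}=\one$ as computed in $\Alg{A}$. Because $\tilde{\Alg{A}}$ is a Brouwerian subalgebra of $\Alg{A}^+$, the implication $\alg{f}\to\alg{a}$ computed inside $\tilde{\Alg{A}}$ yields the same element $\one$, which lies in $\F$; applying modus ponens to $\alg{f},\alg{f}\to\alg{a}\in\F$ then gives $\alg{a}\in\F$, as required. The only delicate point—and arguably the real content of the proposition—is this coherence between $\to$ in $\tilde{\Alg{A}}$ and in $\Alg{A}$: it is precisely the subreduct hypothesis that prevents $[\F)$ from dragging extra elements of $\tilde{\Alg{A}}$ into itself. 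Once that is noted, the rest is routine bookkeeping with implicative filters.
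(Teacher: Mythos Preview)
Your proof is correct. The paper does not actually give a proof of this proposition---it declares the statement ``an easy exercise'' and leaves it to the reader---so there is nothing to compare against; your argument (reducing $[\F)$ to the upward closure of $\F$ via closure of $\F$ under meets, then using that $\to$ in $\tilde{\Alg{A}}$ agrees with $\to$ in $\Alg{A}$) is exactly the sort of routine verification the authors had in mind.
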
 
The proof is an easy exercise and it is left to the reader. 

We will use the following consequence of the above Proposition.

\begin{theorem} \label{thhomsubr} Let $\Alg{A}$ be a Heyting algebra and $\Alg{B} \subseteq \Alg{A}^+$ be a Brouwerian subreduct of $\Alg{A}$. Then every Brouwerian homomorphic image of $\Alg{B}$ is (isomorphic to) a Brouwerian subreduct of a suitable homomorphic image of $\Alg{A}$.
\end{theorem}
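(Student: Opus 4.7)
The plan is to lift the Brouwerian homomorphism on $\Alg{B}$ to a Heyting homomorphism on $\Alg{A}$ via the filter--congruence correspondence, and then use Proposition \ref{prhom} to check that nothing collapses that shouldn't.

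First, let $\varphi \colon \Alg{B} \maps \Alg{C}$ be a surjective Brouwerian homomorphism, and set $\F \bydef \F_\varphi = \set{\alg{b} \in \Alg{B}}{\varphi(\alg{b}) = \one}$. By \eqref{filterhom}, $\Alg{B}/\F \cong \Alg{C}$ as Brouwerian algebras. Now $\F$ is a filter of the Brouwerian subreduct $\Alg{B}$ of $\Alg{A}$, and filters in Brouwerian and Heyting algebras are defined via the same operation $\to$, so we may form the filter $[\F)$ generated by $\F$ inside the Heyting algebra $\Alg{A}$.

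Next, let $\Alg{A}' \bydef \Alg{A}/[\F)$ and let $\pi \colon \Alg{A} \maps \Alg{A}'$ be the canonical Heyting homomorphism. By Proposition \ref{prhom}(a), $\pi(\Alg{B})$ is a Brouwerian subreduct of $\Alg{A}'$, so it remains to show $\pi(\Alg{B}) \cong \Alg{C}$ as Brouwerian algebras. The kernel filter of the restriction $\pi|_\Alg{B} \colon \Alg{B} \maps \Alg{A}'$ is
\[
\F_{\pi|_\Alg{B}} = \set{\alg{b} \in \Alg{B}}{\pi(\alg{b}) = \one_{\Alg{A}'}} = \set{\alg{b} \in \Alg{B}}{\alg{b} \in [\F)} = \Alg{B} \cap [\F).
\]
This is exactly where Proposition \ref{prhom}(b) does the essential work: it gives $\Alg{B} \cap [\F) = \F$, so $\F_{\pi|_\Alg{B}} = \F$. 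Applying \eqref{filterhom} to $\pi|_\Alg{B}$ yields $\pi(\Alg{B}) \cong \Alg{B}/\F \cong \Alg{C}$, exhibiting $\Alg{C}$ as a Brouwerian subreduct of the Heyting homomorphic image $\Alg{A}'$ of $\Alg{A}$.

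The only nontrivial point — and the one the theorem is really about — is the identity $\F = \Alg{B} \cap [\F)$, because a priori the Heyting filter $[\F)$ could absorb elements of $\Alg{B}$ that do not lie in $\F$, which would force $\pi$ to collapse strictly more of $\Alg{B}$ than $\varphi$ does and destroy the desired isomorphism. That potential obstacle is exactly what Proposition \ref{prhom}(b) rules out, so once that proposition is in hand the argument is a short diagram chase.
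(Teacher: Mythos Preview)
Your proof is correct and follows essentially the same route as the paper: form the filter $\F = \F_\varphi$ in $\Alg{B}$, pass to the quotient $\Alg{A}/[\F)$, and use Proposition~\ref{prhom}(b) to see that the restricted kernel is exactly $\F$. The paper phrases the last step as showing that the congruence classes of $\varphi$ and of the quotient map agree on $\Alg{B}$, whereas you phrase it as equality of kernel filters, but these are the same argument.
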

\begin{proof}
Suppose $\varphi: \Alg{B} \maps \Alg{C}$ is a Brouwerian epimorphism. Then $\F_\varphi$ is a filter of $\Alg{B}$. Let us consider filter $[\F_\varphi)$ of $\Alg{A}$ and a homomorphism
\[
\psi: \Alg{A} \maps \Alg{A}/[\F_\varphi).
\]

From Proposition \ref{prhom}(a) we know that $\psi(\Alg{B})$ is a homomorphic image of $\Alg{B}$ and, hence, $\psi(\Alg{B})$ is a Brouwerian subreduct of $\Alg{A}/[\F_\varphi)$. Let us observe that, due to $\F \subseteq [\F)$, we have $\conc{\alg{b}}{\varphi} \subseteq \conc{\alg{b}}{\psi}$. Hence, all we need to prove is that $\conc{\alg{a}}{\varphi} \neq \conc{\alg{b}}{\varphi}$ yields $\conc{\alg{a}}{\psi} \neq \conc{\alg{b}}{\psi}$ for any $\alg{a},\alg{b} \in \Alg{B}$.

Let us prove the contrapositive statement: if $\alg{a},\alg{b} \in \Alg{B}$ and $\conc{\alg{a}}{\psi} = \conc{\alg{b}}{\psi}$, then $\conc{\alg{a}}{\varphi} = \conc{\alg{b}}{\varphi}$. Suppose that $\alg{a},\alg{b} \in \Alg{B}$ and $\conc{\alg{a}}{\psi} = \conc{\alg{b}}{\psi}$. Then
\begin{equation}
\alg{a} \to \alg{b} \in [\F) \text{ and } \alg{b} \to \alg{a} \in [\F). \label{thhom_eq1}
\end{equation}
Due to $\alg{a},\alg{b} \in \Alg{B}$ and $\Alg{B}$ is a Brouwerian algebra and, hence, it is closed under $\to$, we have $\alg{a}\to \alg{b},\alg{b} \to \alg{a} \in \Alg{B}$. Thus, by \eqref{prhom_eq1},
\begin{equation}
\alg{a} \to \alg{b} \in \F \text{ and } \alg{b} \to \alg{a} \in \F. \label{thhom_eq2}
\end{equation}
Hence $\conc{\alg{a}}{\varphi} = \conc{\alg{b}}{\varphi}$.
\end{proof}

\begin{cor} Let $\Alg{A}$ be a Heyting algebra, $\Alg{B}$ be a homomorphic image of $\Alg{A}$ and $\var$ be a variety of Heyting algebras. Then $\Alg{B} \in^+ \var$ as long as $\Alg{A} \in^+ \var$. In other words, class of Heyting algebras B-embedded in a given variety is closed under the formation of homomorphic images.
\end{cor}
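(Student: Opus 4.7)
The plan is to reduce the statement directly to Theorem \ref{thhomsubr}. Unpacking the hypothesis $\Alg{A} \in^+ \var$ gives a Heyting algebra $\Alg{C} \in \var$ together with a Brouwerian embedding $\iota\colon \Alg{A}^+ \emb \Alg{C}^+$. Identifying $\Alg{A}^+$ with its image under $\iota$, we may treat $\Alg{A}^+$ as a Brouwerian subreduct of $\Alg{C}$, which puts us precisely in the setup required by Theorem \ref{thhomsubr} (with $\Alg{C}$ playing the role of $\Alg{A}$ there, and $\Alg{A}^+$ playing the role of $\Alg{B}$).

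Next I would observe that the given Heyting epimorphism $\varphi \colon \Alg{A} \maps \Alg{B}$ restricts to a Brouwerian epimorphism $\varphi^+ \colon \Alg{A}^+ \maps \Alg{B}^+$, because Heyting homomorphisms preserve the operations $\land, \lor, \to, \one$ that define the Brouwerian reduct. So $\Alg{B}^+$ is a Brouwerian homomorphic image of the Brouwerian subreduct $\Alg{A}^+ \subseteq \Alg{C}$.

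Applying Theorem \ref{thhomsubr} to this situation, I obtain a Heyting homomorphic image $\Alg{D}$ of $\Alg{C}$ such that $\Alg{B}^+$ is (isomorphic to) a Brouwerian subreduct of $\Alg{D}$, i.e. $\Alg{B}^+$ Brouwerian-embeds into $\Alg{D}^+$. Since $\var$ is a variety of Heyting algebras and $\Alg{C} \in \var$, its homomorphic image $\Alg{D}$ is again in $\var$. Therefore $\Alg{B} \Bemb \Alg{D} \in \var$, which by definition means $\Alg{B} \in^+ \var$, as required.

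There is essentially no hard step here; the only thing to be careful about is the switch between Heyting-level and Brouwerian-level homomorphisms in step two, and keeping track of the roles in the invocation of Theorem \ref{thhomsubr}. The second assertion of the corollary is just a reformulation: the class $\set{\Alg{X}\in\Heyt}{\Alg{X} \in^+ \var}$ is closed under Heyting homomorphic images.
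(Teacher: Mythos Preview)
Your proof is correct and follows essentially the same route as the paper: both unpack $\Alg{A} \in^+ \var$ to obtain $\Alg{C} \in \var$ with $\Alg{A}^+$ a Brouwerian subreduct, note that the Heyting epimorphism $\varphi$ yields a Brouwerian epimorphism onto $\Alg{B}^+$, apply Theorem~\ref{thhomsubr} to realize $\Alg{B}^+$ inside a homomorphic image of $\Alg{C}$, and finish using closure of $\var$ under homomorphic images. The only cosmetic difference is that the paper carries an explicit isomorphism $\psi$ and composite $\varphi' = \varphi\circ\psi$ where you simply identify $\Alg{A}^+$ with its image under the embedding.
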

\begin{proof} Suppose that $\Alg{A} \in^+ \var$ and $\varphi: \Alg{A} \maps \Alg{B}$ is an epimorphism. Then there is a Heyting algebra $\Alg{C} \in \var$ having a Brouwerian subreduct $\tilde{\Alg{C}}$ isomorphic to $\Alg{A}^+$. Let $\psi: \tilde{\Alg{C}} \maps \Alg{A}^+$ be an isomorphism. By Proposition \ref{prhom}(a), $\varphi(\Alg{A}^+)$ is a Brouwerian subreduct of $\Alg{B}$ and, due to $\psi$ is an onto mapping, $\varphi(\Alg{A}^+) = \Alg{B}^+$. Consider the homomorphism $\varphi': \alg{c} \mapsto \varphi(\psi(\alg{c})), \alg{c} \in \tilde{\Alg{C}}$. 
\begin{figure}[!ht]
\[
\ctdiagram{
\ctv 0,60:{\Alg{A}^+}
\ctv 60,60:{\Alg{B}^+}
\ctv 0,0:{\tilde{\Alg{C}}}
\ctet 0,60,60,60:{\varphi}
\ctel 0,0,0,60:{}
\ctel 0,60,0,0:{\psi}
\cter 0,0,60,60:{\varphi'}
}
\]
\end{figure}
By Theorem \ref{thhomsubr}, $\varphi'(\tilde{\Alg{C}})$ is a Brouwerian subreduct of some homomorphic image $\Alg{C}'$ of algebra $\Alg{C}$. Due to $\var$ being a variety, it is closed under the formation of homomorphic images, $\Alg{C}' \in \var$. Thus, $\Alg{B}^+$ is isomorphic to a Brouwerian subreduct of $\Alg{C}'$ and $\Alg{C}' \in \var$, that is, $\Alg{B} \in^+ \var$.  
\end{proof}

If $\var$ is a variety of Heyting algebras, by $\var^+$ we denote the set of all Brouwerian subreducts of all algebras from $\var$, that is,
\begin{equation}
\var^+ \bydef \set{\Alg{B} \in \Br}{\Alg{B} \Bemb \Alg{A} \text{ for some } \Alg{A} \in \var}. \label{subred}
\end{equation}
Note that, by Maltsev's embedding theorem (see e.g. \cite{GorbunovBookE}[Proposition 1.5.1]), $\var^+$ always forms a quasivariety.

\begin{cor} \label{thsubv} Let $\var$ be a variety of Heyting algebras . Then $\var^+$ forms a variety.
\end{cor}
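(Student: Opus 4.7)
The plan is to appeal to Birkhoff's HSP theorem. Since the remark preceding the statement already establishes that $\var^+$ is a quasivariety, it is closed under subalgebras and products (and isomorphisms); the only missing ingredient for being a variety is closure under Brouwerian homomorphic images.

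That step is exactly the content of Theorem \ref{thhomsubr}. Suppose $\Alg{B} \in \var^+$, witnessed by a Heyting algebra $\Alg{A} \in \var$ with $\Alg{B} \Bemb \Alg{A}$, and let $\varphi \colon \Alg{B} \maps \Alg{B}'$ be a Brouwerian epimorphism. Via the B-embedding we may regard $\Alg{B}$ as a Brouwerian subreduct of $\Alg{A}$, so Theorem \ref{thhomsubr} yields a Heyting homomorphic image $\Alg{A}'$ of $\Alg{A}$ of which $\Alg{B}'$ is (up to isomorphism) a Brouwerian subreduct. Because $\var$ is a variety, it is closed under homomorphic images; thus $\Alg{A}' \in \var$, whence $\Alg{B}' \Bemb \Alg{A}'$ gives $\Alg{B}' \in \var^+$.

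Combining this with the quasivariety remark, $\var^+$ satisfies all three HSP closure conditions and is therefore a variety. The main (indeed, only) obstacle has already been absorbed into Theorem \ref{thhomsubr}, which was set up precisely to transfer a Brouwerian quotient of a subreduct into a subreduct of a Heyting quotient; once that result is available, the corollary reduces to bookkeeping about the HSP operators.
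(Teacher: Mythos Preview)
Your proposal is correct and follows essentially the same route as the paper: both use the preceding remark that $\var^+$ is a quasivariety to reduce the problem to closure under homomorphic images, and then invoke Theorem \ref{thhomsubr} together with the closure of $\var$ under homomorphic images to finish. The only cosmetic difference is that you name Birkhoff's HSP theorem explicitly, whereas the paper leaves it implicit.
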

\begin{proof} Suppose that $\var$ is a variety of Heyting algebras. Then, $\var^+$ is a quasivariety and, hence, is closed under the formation of subalgebras and direct products. So, we only need to prove that $\var^+$ is closed under the formation of the homomorphic images.

Indeed, let $\Alg{B} \in \var^+$. Then, by the definition of $\var^+$, there is an algebra $\Alg{A} \in \var$ and $\Alg{B}$ is B-embedded in $\Alg{A}$. By Theorem \ref{thhomsubr}, every homomorphic image of $\Alg{B}$ is B-embedded in a homomorphic image of $\Alg{A}$ which belongs to $\var$, for $\var$ is a variety and as such $\var$ is closed under the formation of  homomorphic images. Hence, every homomorphic image of $\Alg{B}$ is a member of $\var^+$. 
\end{proof}

\section{B-Saturated Varieties}

In this section we introduce and study B-saturated varieties of Heyting algebras that play a central role in what follows. 

First, let us make a simple but important observation (comp. \cite{Horn_Separation_1962,Kohler_Varieties_1975,Verhozina_Intermediate_1978}) that plays a role in the sequel of this paper.  

\begin{prop} \label{leastel}  If $\Alg{A} \in \Br$ is a finitely generated Brouwerian algebra, then $\Alg{A}$ contains the least element and, therefore, $\Alg{A}$ forms a Heyting algebra.
\end{prop}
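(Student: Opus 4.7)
The plan is to exhibit the least element explicitly as the meet of a (any) finite generating set. Suppose $\Alg{A}$ is generated as a Brouwerian algebra by $g_1,\dots,g_n$, and set $\el{m} \bydef g_1 \land \cdots \land g_n$. I want to show that $\el{m} \le \alg{a}$ for every $\alg{a} \in \Alg{A}$. Since every element of $\Alg{A}$ is $t(g_1,\dots,g_n)$ for some term $t$ in the signature $\{\land,\lor,\to,\one\}$, it suffices to prove the following uniform statement by induction on $t$: for any Brouwerian algebra $\Alg{B}$ and any $\alg{b}_1,\dots,\alg{b}_n \in \Alg{B}$,
\[
t(\alg{b}_1,\dots,\alg{b}_n) \ge \alg{b}_1 \land \cdots \land \alg{b}_n.
\]

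The base cases ($t = x_i$ and $t = \one$) are immediate. For $t = t_1 \land t_2$ and $t = t_1 \lor t_2$ the induction step is obvious from the monotonicity of $\land$ and $\lor$. The step I expect to be the only one worth writing out is $t = t_1 \to t_2$: writing $\el{m}' \bydef \alg{b}_1 \land \cdots \land \alg{b}_n$, the inductive hypothesis applied to $t_2$ gives $t_2(\ov{\alg{b}}) \ge \el{m}'$, hence $t_1(\ov{\alg{b}}) \land \el{m}' \le \el{m}' \le t_2(\ov{\alg{b}})$, and the residuation characterization of $\to$ yields $\el{m}' \le t_1(\ov{\alg{b}}) \to t_2(\ov{\alg{b}})$, as required. (Note that no use is made of $\zero$ at any point — this is precisely where finite generation matters, because without it we cannot take a finite meet as a lower bound.)

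Applying this to $\Alg{A}$ with $\alg{b}_i = g_i$, every element of $\Alg{A}$ dominates $\el{m}$, so $\el{m}$ is the least element of $\Alg{A}$. Setting $\zero \bydef \el{m}$, the expanded algebra $(\Alg{A},\land,\lor,\to,\one,\zero)$ satisfies the Heyting identity $\zero \to x \approx \one$, because $\el{m} \le x$ implies $\el{m} \to x = \one$ in any Brouwerian algebra. Hence $\Alg{A}$ forms a Heyting algebra, completing the proof. The only potentially delicate point is the induction on terms, and even there the $\to$-case reduces to a one-line residuation argument, so I do not anticipate a genuine obstacle.
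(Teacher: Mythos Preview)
Your proof is correct and follows essentially the same approach as the paper: both take the meet of a finite generating set and verify by induction on terms that it lies below every element, with the only nontrivial step being $\to$. Your write-up is more explicit about the residuation argument for the $\to$-case, whereas the paper simply asserts that $\alg{a} \le \alg{b}$ and $\alg{a} \le \alg{c}$ yield $\alg{a} \le \alg{b} \circ \alg{c}$ for each $\circ \in \{\land,\lor,\to\}$.
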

\begin{proof} Let elements $\alg{a}_1,\dots,\alg{a}_n \in \Alg{A}$ generate algebra $\Alg{A}$. By a straightforward induction on the length of a formula expressing an element via generators, it is not hard to prove that element $\alg{a} \bydef \alg{a}_1 \land \dots \land \alg{a}_n$ is the least element of $\Alg{A}$. Indeed,  $\alg{a} \leq \alg{a}_i$ for all $i=1,\dots,n$, and $\alg{a} \leq \alg{b}$ and $\alg{a} \leq \alg{c}$ yields $\alg{a} \leq \alg{b} \circ \alg{c}$ for all $\circ \in \{\land,\lor,\to\}$.
\end{proof}

Let $\Alg{A}$ be a Brouwerian algebra. If $\Alg{A}$ contains the least element, it forms a Heyting algebra and we denote it by $\Alg{A}^\zero$. If $\Alg{A}$ does not contain the least element, by $\Alg{A}^\zero$ we denote a Heyting algebra obtained from $\Alg{A}$ by adjoining a new element $\zero$ and defining the operations in the following way: $\alg{a} \land \zero = \zero \land \alg{a} = \zero; \alg{a} \lor \zero = \zero \lor \alg{a} = \alg{a}; \zero \to \alg{a} = \one; \alg{a} \to \zero = \zero$.  

Now we are in a position to prove \eqref{posfrag}.

\begin{cor}\label{corsep}  For any set $\Gamma$ of positive formulas
\[
(\Int + \Gamma)^+ = \Int^+ + \Gamma.
\]
\end{cor}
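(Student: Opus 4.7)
The plan is to split the equality into two inclusions, of which one is essentially formal and the other uses Proposition~\ref{leastel} in a crucial way. For $\Int^+ + \Gamma \subseteq (\Int + \Gamma)^+$, I would observe that every formula derivable from $\Int^+ \cup \Gamma$ by modus ponens and positive substitutions is a positive formula belonging to $\Int + \Gamma$ (since $\Int + \Gamma$ contains both $\Int^+$ and $\Gamma$ and is closed under modus ponens and under all substitutions, positive included), hence belongs to $(\Int + \Gamma) \cap \Frm^+ = (\Int + \Gamma)^+$.

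For the nontrivial inclusion $(\Int + \Gamma)^+ \subseteq \Int^+ + \Gamma$, I would argue algebraically, by contraposition. Suppose $A \in \Frm^+$ is not a theorem of $\Int^+ + \Gamma$. Then some Brouwerian algebra $\Alg{B}$ in the variety $\var_{\Int^+ + \Gamma}$ carries a valuation $\nu$ with $\nu(A) \neq \one_{\Alg{B}}$. Let $\Alg{B}'$ be the Brouwerian subalgebra of $\Alg{B}$ generated by the finite set $\{\nu(p) \mid p \in \pi(A)\}$. By Proposition~\ref{leastel}, $\Alg{B}'$ has a least element and is therefore already (the Brouwerian reduct of) a Heyting algebra $\Alg{H}$.

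It then remains to verify that $\Alg{H}$ is a Heyting counter-model to $A$ in $\Int + \Gamma$. Since $\Alg{H}$ is a Heyting algebra, $\Alg{H} \models \Int$ automatically; and because $\Gamma \subseteq \Frm^+$, the validity of each formula in $\Gamma$ depends only on the Brouwerian reduct of $\Alg{H}$, which is $\Alg{B}'$, a subalgebra of $\Alg{B} \models \Gamma$. Hence $\Alg{H} \models \Int + \Gamma$. Further, since $A$ is positive and its variables are interpreted inside $\Alg{B}'$, the value $\nu(A)$ is the same whether computed in $\Alg{B}$ or in $\Alg{B}'$, so $\nu(A) \neq \one$ in $\Alg{H}$. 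Therefore $A \notin \Int + \Gamma$, so $A \notin (\Int + \Gamma)^+$, contradicting the assumption $A \in (\Int + \Gamma)^+$.

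The only genuinely substantive step is the Wajsberg-style passage from a Brouwerian counter-model to a Heyting counter-model, which is packaged in Proposition~\ref{leastel}; everything else — the verification that adjoining (or recognising) the least element preserves the validity of the positive axioms in $\Gamma$ and of the refutation of the positive formula $A$ — is routine precisely because positive formulas are blind to the constant $\zero$.
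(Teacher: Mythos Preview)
Your proof is correct and follows essentially the same route as the paper's: both split into the easy inclusion and then, for the hard direction, pass from a Brouwerian counter-model to a finitely generated Brouwerian (sub)algebra, invoke Proposition~\ref{leastel} to view it as a Heyting algebra, and use positivity of $\Gamma$ and $A$ to transfer validity and refutation. The only cosmetic wrinkle is that you announce a contrapositive argument but then close with ``contradicting the assumption $A \in (\Int + \Gamma)^+$,'' which was never assumed in your setup; simply ending at ``$A \notin (\Int + \Gamma)^+$'' completes the contrapositive cleanly.
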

\begin{proof} Indeed,
\[
\Int + \Gamma \supseteq \Int^+ + \Gamma,
\]
hence,
\[
(\Int + \Gamma)^+ \supseteq (\Int^+ + \Gamma)^+ = \Int^+ + \Gamma.
\]

Conversely, assume for contradiction that there is a positive formula $A$ such that 
\begin{equation}
A \in (\Int + \Gamma)^+  \text{ and } A \notin \Int^+ + \Gamma. \label{eq_corsep0}
\end{equation}
Then, due to $A \notin \Int^+ + \Gamma$, there is a finitely generated Brouwerian algebra $\Alg{A}$ such that
\begin{equation}
\Alg{A} \models \Gamma \text{ and } \Alg{A} \not\models A. \label{eq_corsep1}
\end{equation}
 By Proposition \ref{leastel}, $\Alg{A}$ forms a Heyting algebra $\Alg{A}^\zero$. Due to $\Alg{A}^\zero$ is a Heyting algebra, we have $\Alg{A}^\zero \models \Int$; and, due to all formulas from $\Gamma$ are positive, from \eqref{eq_corsep1} we can conclude that $\Alg{A}^\zero \models \Gamma$. Thus, $\Alg{A}^\zero \models (\Int \cup \Gamma)$ and, hence, $\Alg{A}^\zero \models (\Int + \Gamma)^+$, and, by \eqref{eq_corsep0}, $A \in (\Int + \Gamma)^+$, hence 
\begin{equation}
\Alg{A}^\zero \models A.  \label{eq_corsep2}
\end{equation}
On the other hand, since $A$ is a positive formula, any refuting valuation in $\Alg{A}$ is at the same time a refuting valuation in $\Alg{A}^\zero$, for algebras $\Alg{A}$ and $\Alg{A}^\zero$ have the same universes. Thus, from \eqref{eq_corsep1} we have $\Alg{A}^\zero \not\models A$ and we have arrived at contradiction with \eqref{eq_corsep2}. 
\end{proof}

\begin{cor} \label{corposf} Let $\LogP$ be a positive logic. Then
\begin{equation}
\LogP = (\Int + \LogP)^+.  \label{eqcorposf}
\end{equation} 
\end{cor}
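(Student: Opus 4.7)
The plan is to invoke Corollary \ref{corsep} with $\Gamma \bydef \LogP$, viewing the positive logic $\LogP$ as a set of positive formulas. This is legal because, by definition, $\LogP \subseteq \Frm^+$. Corollary \ref{corsep} then gives immediately
\[
(\Int + \LogP)^+ = \Int^+ + \LogP.
\]

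The remaining step is to observe that $\Int^+ + \LogP = \LogP$. This uses only the definition of a positive logic: $\LogP$ is, by assumption, a set of positive formulas containing $\Int^+$ and closed under modus ponens and positive substitutions. Thus $\LogP$ itself is already a positive logic containing $\Int^+ \cup \LogP$, whereas $\Int^+ + \LogP$ was defined as the \emph{least} such logic. Hence the two coincide. Chaining the equalities yields \eqref{eqcorposf}.

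There is no real obstacle here; the corollary is essentially a repackaging of Corollary \ref{corsep} for the special case where $\Gamma$ is itself already a positive logic, as was previewed in item (c) of Section~2.1. The only thing to be mildly careful about is not to confuse the operator $\Int^+ + (\cdot)$ (closure under positive substitutions and modus ponens within $\Frm^+$) with $\Int + (\cdot)$ (closure under all substitutions within $\Frm$); the argument above relies precisely on using the former and noting that $\LogP$ is closed under it by hypothesis.
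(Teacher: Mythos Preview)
Your proposal is correct and matches the paper's own proof essentially line for line: the paper also applies Corollary~\ref{corsep} with $\Gamma = \LogP$ and then notes $\Int^+ + \LogP = \LogP$ because $\Int^+ \subseteq \LogP$. Your justification of the latter equality is slightly more explicit than the paper's, but the argument is the same.
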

\begin{proof} Indeed, by Corollary \ref{corsep}, taking  $\Gamma = \LogP$, we have
\[
\Int^+ + \LogP = (\Int + \LogP)^+,  
\]
and $\Int^+ + \LogP = \LogP$ due to $\Int^+ \subseteq \LogP$. 
\end{proof}

\begin{cor} \label{cordist} For any positive logics $\LogP_0$ and $\LogP_1$, 
\begin{equation}
\LogP_0 = \LogP_1 \text{ if and only if } \Int + \LogP_0 = \Int + \LogP_1. \label{eqcordist}
\end{equation} 
\end{cor}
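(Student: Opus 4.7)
The statement is an equivalence, so I would handle the two directions separately.

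The forward direction ($\Rightarrow$) is immediate: if $\LogP_0 = \LogP_1$, then the si-logics generated over $\Int$ by the same set of positive axioms coincide, so $\Int + \LogP_0 = \Int + \LogP_1$. No work is required beyond noting that $\Int + (-)$ is a monotone operator on sets of formulas.

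The reverse direction ($\Leftarrow$) is where the preceding results do the work. Assume $\Int + \LogP_0 = \Int + \LogP_1$. Applying the positive-fragment operator $(-)^+$ to both sides gives $(\Int + \LogP_0)^+ = (\Int + \LogP_1)^+$. By Corollary~\ref{corposf}, each side collapses back to the original positive logic: $(\Int + \LogP_i)^+ = \LogP_i$ for $i = 0,1$. Chaining these equalities yields $\LogP_0 = \LogP_1$.

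So the proof is essentially a one-line application of Corollary~\ref{corposf}, which is itself the content that required real work (via Proposition~\ref{leastel} and the adjunction of a least element to a finitely generated Brouwerian algebra). There is no genuine obstacle here — the corollary is a direct bookkeeping consequence of the identity $\LogP = (\Int + \LogP)^+$ established just above, and it records the fact that the map $\varphi : \LogL \mapsto \LogL^+$ from \eqref{posmap}, when restricted to si-logics of the form $\Int + \LogP$, is injective with explicit inverse $\LogP \mapsto \Int + \LogP$.
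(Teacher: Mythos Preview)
Your proof is correct and matches the paper's approach: both directions rely on Corollary~\ref{corposf}, with the forward direction being trivial. The only cosmetic difference is that the paper phrases the nontrivial direction as a contrapositive (from $\LogP_0 \neq \LogP_1$ conclude $(\Int + \LogP_0)^+ \neq (\Int + \LogP_1)^+$, hence $\Int + \LogP_0 \neq \Int + \LogP_1$), while you argue directly.
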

\begin{proof} If $\LogP_0 \neq \LogP_1$, by Corollary \ref{corposf}, $(\Int + \LogP_0)^+ \neq (\LogP_1 + \Int)^+$ and, hence $(\Int + \LogP_0) \neq (\LogP_1 + \Int)$. And the converse statement is trivial.
\end{proof}

\subsubsection{Definition of B-Saturated Variety}

\begin{definition}
We say that a variety $\var$ of Heyting algebras is \emph{B-saturated} if for every Heyting algebra $\Alg{A}$
\[
\Alg{A} \in^+ \var \text{ entails } \Alg{A} \in \var.
\]
\end{definition}

The following proposition gives an intrinsic characterization of B-saturated varieties.

\begin{prop} \label{printr} Let $\var$ be a variety of Heyting algebras. Then the following is equivalent:
\begin{itemize}
\item[(a)] $\var$ is B-saturated; 
\item[(b)] for any algebra $\Alg{A} \in \var$ and any elements $\alg{a}_1,\dots,\alg{a}_n \in \Alg{A}$, if $\Alg{B}$ is a Brouwerian subalgebra of $\Alg{A}^+$ generated (as a Brouwerian subalgebra) by these elements, then $\Alg{B}^\zero \in \var$.
\end{itemize}
\end{prop}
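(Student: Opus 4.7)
I would prove the two implications separately, using Proposition \ref{leastel} in a crucial way.

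\textbf{Direction (a) $\Rightarrow$ (b).} Start from $\Alg{A} \in \var$ with elements $\alg{a}_1,\dots,\alg{a}_n$, and let $\Alg{B}$ be the Brouwerian subalgebra of $\Alg{A}^+$ they generate. Since $\Alg{B}$ is finitely generated, Proposition \ref{leastel} provides a least element, so $\Alg{B}^\zero$ is simply $\Alg{B}$ endowed with its own existing bottom as the Heyting constant. Consequently $(\Alg{B}^\zero)^+ = \Alg{B}$, and this Brouwerian algebra is by construction a Brouwerian subalgebra of $\Alg{A}^+$. Thus $\Alg{B}^\zero \Bemb \Alg{A}$, witnessing $\Alg{B}^\zero \in^+ \var$. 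B-saturation then delivers $\Alg{B}^\zero \in \var$.

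\textbf{Direction (b) $\Rightarrow$ (a).} Take a Heyting algebra $\Alg{C}$ with $\Alg{C} \in^+ \var$; pick $\Alg{A} \in \var$ such that $\Alg{C}^+ \Bemb \Alg{A}^+$, identifying $\Alg{C}^+$ with the resulting Brouwerian subalgebra of $\Alg{A}^+$. Since $\var$ is a variety (an equational class), it suffices to show that every finitely generated Heyting subalgebra of $\Alg{C}$ lies in $\var$. So let $\Alg{D}$ be the Heyting subalgebra of $\Alg{C}$ generated by $\alg{a}_1,\dots,\alg{a}_n$, and consider the Brouwerian subalgebra $\Alg{B}$ of $\Alg{A}^+$ generated by $\alg{a}_1,\dots,\alg{a}_n,\zero_\Alg{C}$. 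A straightforward closure argument gives $\Alg{B} = \Alg{D}^+$. Crucially, $\Alg{B}$ already has a least element, namely $\zero_\Alg{C}$, so $\Alg{B}^\zero$ coincides with $\Alg{D}$ as a Heyting algebra. Applying (b) to $\Alg{A}$ and the chosen generators yields $\Alg{B}^\zero = \Alg{D} \in \var$.

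\textbf{Main obstacle.} The delicate point is the passage from finitely generated Heyting subalgebras back to finitely generated Brouwerian subalgebras: one must throw in $\zero_\Alg{C}$ as an extra Brouwerian generator to recover $\Alg{D}^+$, and then verify that $(\Alg{D}^+)^\zero$ is $\Alg{D}$ itself rather than a proper extension. Everything else is essentially bookkeeping once one observes, via Proposition \ref{leastel}, that the $(\cdot)^\zero$ construction in (b) is nontrivial only in the non-finitely-generated case, which does not arise here.
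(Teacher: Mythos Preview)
Your proposal is correct and follows essentially the same line as the paper. The implication (a) $\Rightarrow$ (b) is identical; for (b) $\Rightarrow$ (a) the paper argues by contradiction via a refuting formula and valuation rather than via the local characterization of varieties through finitely generated subalgebras, but the decisive trick---adjoining $\zero_\Alg{C}$ as an extra Brouwerian generator so that $\Alg{B}^\zero$ coincides with the desired Heyting subalgebra---is exactly the same in both arguments.
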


Condition (b) can be rephrased in the following way: every finitely generated Brouwerian subreduct of any algebra from $\var$ belongs to $\var$ if regarded as a Heyting algebra. 

\begin{proof} (a) $\Rightarrow$ (b). Assume that $\var$ is a B-saturated variety of Heyting algebras, $\Alg{A} \in \var$ and $\alg{a}_i,i < n$ are elements of $\Alg{A}$. Let $\Alg{B}$ be a Brouwerian subalgebra of $\Alg{A}^+$ generated by elements $\alg{a}_i,i < n$ as Brouwerian algebra.  We need to show that $\Alg{B}^\zero \in \var$.

Indeed, let us note that $\Alg{B}^\zero$ is B-embedded in $\Alg{A}$ and, due to $\var$ is B-saturated, we have $\Alg{B}^\zero \in \var$.

(b) $\Rightarrow$ (a). Suppose that for any algebra $\Alg{A} \in \var$ and any elements $\alg{a}_1,\dots,\alg{a}_n \in \Alg{A}$, if $\Alg{B}$ is a Brouwerian subalgebra of $\Alg{A}^+$ generated (as a Brouwerian subalgebra) by these elements, then $\Alg{B}^\zero \in \var$.

Now, assume for contradiction that for some Heyting algebra $\Alg{A}$,
\begin{equation}
 \Alg{A} \in^+ \var \text{ and } \Alg{A} \notin \var. \label{eq_printr0}
\end{equation}
Then there is a formula $A(p_0,\dots,p_{n-1})$ such that 
\begin{equation}
\var \models A \text{ and } \Alg{A} \not\models A. \label{eq_printr1}
\end{equation} 
Suppose that $\alg{a}_i,i < n$ are refuting elements, that is, $A(\alg{a}_0,\dots, \alg{a}_{n-1}) \neq \one$. Let us consider Brouwerian subalgebra $\Alg{B}$ of $\Alg{A}^+$ generated as Brouwerian algebra by elements $\zero$ and $\alg{a}_i, i <n$. On the one hand, by assumption, $\Alg{B}^\zero \in \var$. On the other hand, due to $\zero \in \Alg{B}$, we have $\Alg{B} = (\Alg{B}^\zero)^+$, therefore $\Alg{B}^\zero \not\models A$. Thus, $\var \not\models A$ and this contradicts \eqref{eq_printr1}.
\end{proof}

We will see (Corollary \ref{corcont}) that there is continuum many distinct B-suturated subvarieties of $\Heyt$.

\begin{example} Clearly, the variety $\Heyt$ of all Heyting algebras is B-saturated, for, by Proposition \ref{leastel}, every finitely generated Brouwerian algebra forms a Heyting algebra and $\Heyt$ contains every Heyting algebra. 
\end{example}

Let $\Heyt_n$ be a variety of all Heyting algebras not containing chain (that is, lineary ordered) subalgebras having $n+2$ element. For instance, $\Heyt_1$ is a variety of all Boolean algebras.

\begin{example} Variety $\Heyt_n$ is B-saturated for all $n > 0$. Indeed, for any algebra $\Alg{A}$ any Brouwerian chain subalgebra of $\Alg{A}^+$ can be extended (by adjoining $\zero$, if necessary) to a Heyting chain subalgebra of $\Alg{A}$.
\end{example}

\begin{example} Variety $\Ch$ generated by all chain algebras is B-saturated. The proof is pretty straightforward and it is based on the fact that only the chain algebras are subdirectly irreducible in $\Ch$, and we leave the proof to the reader.
\end{example}

On the other hand, variety $\KC$ defined by additional axiom $\neg x \lor \neg\neg x \approx \one$ is not B-saturated: algebra depicted at Fig. \ref{figKC} belongs to $\KC$, while positive subalgebra generated by elements $\alg{a},\alg{b}$ (elements of which are marked by $\circ$) does not belong to $\KC$.

\begin{figure}[ht]
\[
\ctdiagram{
\ctnohead
\ctinnermid
\ctel 0,40,20,20:{}
\ctel 0,40,20,60:{}
\ctel 40,40,20,60:{}
\ctel 20,60,20,80:{}
\ctel 20,20,40,40:{}
\ctel 20,20,20,0:{}
\ctv 0,40:{\circ}
\ctv 20,21:{\circ}
\ctv 20,60:{\circ}
\ctv 40,40:{\circ}
\ctv 20,80:{\circ}
\ctv 20,-1:{\bullet}
\ctv 48,40:{\alg{b}}
\ctv -8,40:{\alg{a}}
}
\]
\caption{} \label{figKC}
\end{figure}

It is not hard to see that any subvariety of $\KC$ containing the above algebra is not B-saturated (note that subvarieties of $\KC$ not containing this algebra are precisely the ones generated by some chain algebras).

\section{Wajsberg Reduction}

In \cite{Wajsberg_Heyting_1977}, as a part of the proof of separation theorem for $\IPC$, Wajsberg had observed the close relations between $\Int$ and $\Int^+$. More precisely, he came with a way to link every formula $A$ with a positive formula $A^+$ in such a way that $A \in \Int$ if and only if $A^+ \in \Int^+$. We discuss this link in the section that follows.

\subsection{Wajsberg's Theorem about $\zero$-elemination}

If $\pi$ is a finite non-void set of variables, by $\pi^\land$ we denote the conjunction of all variables from $\pi$, that is, $\pi^\land \bydef \bigwedge_{p \ \in \ \pi} p$.

In \cite{Wajsberg_Heyting_1977} Wajsberg was using the following reduction: if $A$ is a formula and $p_0,\dots,p_{n-1}$ is a set of all variables occurring in $A$ and $p$ is a variable not occurring in $A$, then formula $A$ can be reduced to the positive formula 
\[
A^* \bydef (p \to p_1) \to ((p \to p_2) \to ((p \to p_3) \dots \to A')\dots),   
\]
where $A'$ is obtained from $A$ by replacing all occurrences of $\zero$ with $p$. 

Let $A$ be a formula and $\pi$ be a finite non-void set of variables.
A \textit{reduction of} $A$ \textit{by} $\pi$ is the formula $A^\pi$ obtained from $A$ by replacing every occurrence of $\zero$ with $\pi^\land$. If $\pi =\{p\}$ instead of $A^{\{p\}}$ we write $A^p$.

We use the following modification of Wajsberg reduction (comp. \cite{Verhozina_Intermediate_1978}). 

\begin{definition} Let $\pi$ be a set of variables and $p$ be a variable.
\textit{Wajsberg reduction} of a formula $A$ \emph{by} $p,\pi$ is the formula
\begin{equation}
W(A,\pi,p) \means (p \to \pi^\land) \to A^p.  \label{wreduct}
\end{equation}
\end{definition}

Now, the Wajsberg's Theorem can be stated as follows. 

\begin{theorem}[{\cite{Wajsberg_Heyting_1977}[Theorem 1 \S7]}] \label{WajsTh} For any formula $A$, 
\[
\vdash_\IPC A \text{ if and only if } \vdash_{\IPC^+} W(A,\pi,p),
\]
where $p \notin \pi(A)$,$\pi \bydef \pi(A) \cup \{p\},$ and $\IPC^+$ is a calculus obtained from $\IPC$ by omitting axioms for $\zero$.  
\end{theorem}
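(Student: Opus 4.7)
The plan is to prove the two implications separately, with the backward direction being an easy substitution argument and the forward direction requiring a controlled transformation of an $\IPC$-proof.

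For the backward direction, suppose $\vdash_{\IPC^+} W(A,\pi,p)$. Since $\IPC^+$ is obtained from $\IPC$ by dropping axioms, every $\IPC^+$-theorem is an $\IPC$-theorem, so $\vdash_{\IPC} (p \to \pi^\land) \to A^p$. Apply the substitution sending $p$ to $\zero$ and fixing every other variable; since $p \notin \pi(A)$, this substitution fixes all variables of $A$. Under it $\pi^\land = p \land \bigwedge_{q \in \pi(A)} q$ becomes $\zero \land \bigwedge_{q \in \pi(A)} q$, which is $\IPC$-equivalent to $\zero$, so the antecedent $p \to \pi^\land$ becomes $\zero \to \zero$, a theorem; meanwhile $A^p$ reverts to $A$, because every $p$ in $A^p$ occupies the place of a former $\zero$ in $A$. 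Hence $\vdash_\IPC A$ by modus ponens.

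For the forward direction, suppose $\vdash_\IPC A$. Fix a Hilbert-style proof; by composing it with a uniform substitution sending any auxiliary variable outside $\pi$ to $p$, we may assume that every formula in the proof has all its variables in $\pi = \pi(A) \cup \{p\}$, while the end formula is still $A$. Now carry out the formal replacement $\zero \mapsto p$ on the entire proof, turning each line $B$ into the positive formula $B^p$. Every non-$\zero$ axiom of $\IPC$ becomes another instance of the same positive schema and so remains an axiom of $\IPC^+$; modus ponens is preserved. The only steps requiring work are instances of the $\zero$-axiom $\zero \to X$, which after substitution become $p \to X^p$, where $X^p$ is again a positive formula whose variables lie in $\pi$.

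The key auxiliary lemma is that, from the hypothesis $p \to \pi^\land$, one can derive $p \to X$ in $\IPC^+$ for every positive formula $X$ whose variables all lie in $\pi$. This is proved by a straightforward induction on $X$: for the base case $X = q \in \pi$, compose $p \to \pi^\land$ with the projection $\pi^\land \to q$; for $X = Y \land Z$ and $X = Y \lor Z$ use $\land$-introduction and $\lor$-weakening on the inductive hypotheses; and for $X = Y \to Z$ one only needs $p \to Z$, which together with $Z \to (Y \to Z)$ yields $p \to (Y \to Z)$. Applying this lemma to each transformed $\zero$-axiom converts the transformed proof into an $\IPC^+$-derivation of $A^p$ from the single assumption $p \to \pi^\land$, and the deduction theorem (which holds in $\IPC^+$, since its proof uses only positive axioms) delivers $\vdash_{\IPC^+} (p \to \pi^\land) \to A^p$, i.e.\ $\vdash_{\IPC^+} W(A,\pi,p)$. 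The main technical obstacle is precisely the variable-normalization step, because the replacement $\zero \mapsto p$ yields a sound $\IPC^+$-derivation only after one ensures that all variables appearing in the original proof belong to $\pi$, so that the auxiliary lemma is actually applicable at every transformed $\zero$-axiom.
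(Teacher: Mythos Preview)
Your proof is correct. Note, however, that the paper does not supply its own proof of this statement: Theorem~\ref{WajsTh} is simply quoted from Wajsberg, and the paper's contribution is the generalization Theorem~\ref{thwajsb}, whose proof is entirely semantic. There, the implication $(b)\Rightarrow(c)\Rightarrow(a)$ is the same substitution argument you give for the backward direction, but for $(a)\Rightarrow(b)$ the paper argues by contraposition on the algebraic side: a refuting valuation of $W(A,\pi,p)$ in some $\Alg{A}\in\var$ is restricted to the Brouwerian subalgebra generated by the values of the variables in $\pi$, which by Proposition~\ref{leastel} has a least element (namely $\nu(p)$, thanks to $\nu(p\to\pi^\land)=\one$), and B-saturation then puts the resulting Heyting algebra back in $\var$ while still refuting $A$.

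Your route is genuinely different: it is proof-theoretic, transforming an $\IPC$-derivation line by line via $\zero\mapsto p$ and repairing the damaged $\zero$-axioms with the inductive lemma ``$p\to\pi^\land$ yields $p\to X$ for every positive $X$ with $\pi(X)\subseteq\pi$''. This is essentially Wajsberg's original strategy and has the advantage of being purely syntactic, requiring no completeness theorem; the paper's semantic argument, on the other hand, scales immediately to arbitrary B-saturated varieties (hence to all positively axiomatized si-logics), which is precisely what the paper needs for its later applications. The variable-normalization step you flag is indeed the one place where care is required, and your handling of it (substituting extraneous variables to $p$ before the $\zero\mapsto p$ replacement) is sound.
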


\subsection{Generalization of Wajsberg Theorem} \label{WajsbergTh}

The below theorem extends Theorem \ref{WajsTh} to the logics corresponding to B-saturated varieties. As we will see (Theorem \ref{thposdef}), the logics corresponding to B-saturated varieties are precisely the logics that can be defined by a set of positive formulas. 

First, we need to establish a rather simple technical property of the Wajsberg's reduction.

\begin{prop} \label{prsubs}
Let $A$ be a positive formula and $\pi \supseteq \pi(A)$ be a finite non-void set of variables. Then for every substitution $\sigma \in \Sigma$ there is a substitution $\sigma^\pi \in \Sigma^+$ such that
\[
(\sigma(A))^\pi = \sigma^\pi(A).
\]
\end{prop}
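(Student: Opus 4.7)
The plan is to define the positive substitution $\sigma^\pi$ pointwise by the obvious recipe and then verify the claimed identity by induction on the structure of the positive formula $A$.

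First I would \emph{construct} $\sigma^\pi$ explicitly: for every $p \in \Vars$, set $\sigma^\pi(p) \bydef (\sigma(p))^\pi$, i.e.\ apply $\sigma$ and then replace every occurrence of $\zero$ in the resulting formula by $\pi^\land$. Since $\pi^\land$ is a conjunction of propositional variables and hence positive, every occurrence of $\zero$ in $\sigma(p)$ is erased, so $(\sigma(p))^\pi \in \Frm^+$, and therefore $\sigma^\pi \in \Sigma^+$, as required. Note that the hypothesis $\pi \supseteq \pi(A)$ is not needed to make the construction legal; it only becomes relevant in later applications of the proposition.

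Next I would prove the identity $(\sigma(A))^\pi = \sigma^\pi(A)$ by induction on the complexity of the positive formula $A$. The base case is $A = p \in \Vars$: then $(\sigma(A))^\pi = (\sigma(p))^\pi = \sigma^\pi(p) = \sigma^\pi(A)$ directly by definition. Because $A$ is positive, the case $A = \zero$ does not arise. For the inductive step, suppose $A = B \circ C$ with $\circ \in \{\land,\lor,\to\}$ and both $B,C$ positive. The key observation is that the $\pi$-reduction commutes with the propositional connectives, since it acts only on occurrences of $\zero$ and these occurrences lie entirely within the subformulas $\sigma(B)$ and $\sigma(C)$ of $\sigma(A) = \sigma(B) \circ \sigma(C)$. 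Hence
\[
(\sigma(A))^\pi = (\sigma(B))^\pi \circ (\sigma(C))^\pi = \sigma^\pi(B) \circ \sigma^\pi(C) = \sigma^\pi(A),
\]
where the middle equality uses the induction hypothesis applied to $B$ and $C$.

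There is essentially no obstacle here: the whole content of the proposition is the fact that substitution commutes with the pointwise $\pi$-reduction when the formula being substituted into is positive, and this reduces to checking one-line base and inductive cases. The proof therefore amounts to setting up the right definition of $\sigma^\pi$ and running a routine structural induction.
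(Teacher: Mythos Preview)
Your proof is correct and follows essentially the same approach as the paper: define $\sigma^\pi(p)\bydef(\sigma(p))^\pi$ and use that, since $A$ is positive, the $\pi$-reduction distributes over the connectives of $A$ and hence can be pushed into the substituted subformulas. The only difference is presentational: you spell out the structural induction explicitly, whereas the paper compresses it into the single observation $(A(B_0,\dots,B_{n-1}))^\pi = A(B_0^\pi,\dots,B_{n-1}^\pi)$.
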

\begin{proof} Suppose $\pi(A) = \{p_0,\dots,p_{n-1}\}$, i.e. $A = A(p_0,\dots,p_{n-1})$ and $\sigma: p_i \mapsto B_i, i < n$. Then, $\sigma(A) = A(B_1,\dots,B_n)$, and
\[
(\sigma(A))^\pi = (A(B_0,\dots,B_{n - 1}))^\pi. 
\]
Recall that $A$ is a positive formula and, thus, does not contain $\zero$. Therefore, 
\[
(A(B_0,\dots,B_{n - 1}))^\pi  = A(B_0^\pi,\dots,B_{n -1}^\pi). 
\]
Let us observe that $B^\pi_i \in \Frm^+$ for all $i < n$. Therefore,
\[
(\sigma(A))^\pi = A(B_1^\pi,\dots,B_n^\pi) = \sigma^\pi(A), 
\] 
where
\[
\sigma^\pi: p_i \mapsto B_i^\pi, i < n. 
\]
And it is clear that $\sigma^\pi \in \Sigma^+$.
\end{proof}

\begin{theorem} \label{thwajsb}Let $\var$ be a B-saturated variety, $A$ be a formula and $\pi$ be a set of variables such that $\pi(A) \subset \pi$, and let $p \in \pi \setminus \pi(A)$. Then the following is equivalent
\begin{itemize}
\item[(a)] $\var \models A$;
\item[(b)] $\var \models W(A,\pi,p)$;
\item[(c)] $\var \models A^\pi$.
\end{itemize}
\end{theorem}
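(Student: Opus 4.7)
The plan is to close the cycle (a) $\Rightarrow$ (b) $\Rightarrow$ (c) $\Rightarrow$ (a). Only (a) $\Rightarrow$ (b) will really use B-saturation, and it will be the main obstacle: one must combine a quotient construction with the intrinsic characterisation from Proposition~\ref{printr} and check carefully that the syntactic replacement $A \mapsto A^p$ really matches the semantic interpretation of $\zero$ as a chosen element. The other two implications will be direct manipulations of substitutions and valuations inside an arbitrary algebra of $\var$.

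For (a) $\Rightarrow$ (b), I would fix $\Alg{A} \in \var$ and a valuation $\nu$, set $\alg{d} \bydef \nu(p \to \pi^\land)$, and aim at $\alg{d} \leq \nu(A^p)$. The idea is to pass to the quotient $\bar{\Alg{A}} \bydef \Alg{A}/[\alg{d})$ (still in $\var$) and to the induced valuation $\bar\nu$; in $\bar{\Alg{A}}$ one has $\bar\nu(p \to \pi^\land) = \one$, so $\bar\nu(p) \leq \bar\nu(q)$ for every $q \in \pi$, and in particular $\bar\nu(\pi^\land) = \bar\nu(p)$. The Brouwerian subalgebra $\bar{\Alg{B}}$ of $\bar{\Alg{A}}^+$ generated by the finite set $\{\bar\nu(q) : q \in \pi\}$ then has least element $\bar\nu(p)$ by Proposition~\ref{leastel}, and Proposition~\ref{printr} (applied to the B-saturated $\var$) guarantees $\bar{\Alg{B}}^\zero \in \var$ with its distinguished $\zero$ equal to $\bar\nu(p)$. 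Interpreting each $q \in \pi(A)$ as $\bar\nu(q)$ in $\bar{\Alg{B}}^\zero$, the value of $A$ will coincide with $\bar\nu(A^p)$, because replacing every $\zero$ in $A$ by $p$ (the syntactic definition of $A^p$) mirrors interpreting $\zero$ semantically as $\bar\nu(p)$. Hypothesis (a) forces this common value to be $\one$, giving $\nu(A^p) \in [\alg{d})$, as required.

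For (b) $\Rightarrow$ (c), I would apply the substitution $\sigma: p \mapsto \pi^\land$ to $W(A,\pi,p) = (p \to \pi^\land) \to A^p$. Since $p \notin \pi(A)$, every occurrence of $p$ in $A^p$ was introduced in place of a $\zero$, so $\sigma(A^p) = A^\pi$; and $\sigma(p \to \pi^\land)$ has the form $\pi^\land \to \sigma(\pi^\land)$, which is provably $\one$ in $\Int^+$ by idempotence of meet. Thus $\sigma(W(A,\pi,p))$ is $\Int$-equivalent to $A^\pi$, and closure of $\var$ under substitution gives (c). For (c) $\Rightarrow$ (a), given $\Alg{A} \in \var$ and a valuation $\nu$ of $A$, I would extend $\nu$ by sending every variable of the nonempty set $\pi \setminus \pi(A)$ to $\zero_{\Alg{A}}$; then $\nu(\pi^\land) = \zero_{\Alg{A}}$ and $\nu(A^\pi) = \nu(A)$, so (c) delivers $\nu(A) = \one$.
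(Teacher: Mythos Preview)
Your argument is correct and follows essentially the same route as the paper: the implications (b) $\Rightarrow$ (c) and (c) $\Rightarrow$ (a) are handled by the obvious substitutions, and the substantive direction (a) $\Rightarrow$ (b) is obtained by passing to the Brouwerian subalgebra generated by the values $\nu(q)$, $q\in\pi$, recognising $\nu(p)$ as its least element via Proposition~\ref{leastel}, and invoking B-saturation (Proposition~\ref{printr}) to conclude that the resulting Heyting algebra lies in $\var$ and hence validates $A$.

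The only procedural difference is that the paper argues (a) $\Rightarrow$ (b) contrapositively: starting from a refutation of $W(A,\pi,p)$ one already has $\nu(p\to\pi^\land)=\one$, so your preliminary passage to the quotient $\Alg{A}/[\alg{d})$ is unnecessary there. Your direct argument is perfectly sound (varieties are closed under homomorphic images, so $\bar{\Alg{A}}\in\var$), but the contrapositive formulation lets one skip that step entirely and work in $\Alg{A}$ itself.
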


\begin{remark} The equivalence of (a) and (c) was observed by M.~Verhozina \cite{Verhozina_Intermediate_1978}[Corollary at p.16]. See also  \cite{deJongh_Zhao_Positive_2015} where a different reduction to a positive formula is used.
\end{remark}

\begin{proof}
(b) $\Rightarrow$ (c).  $A^\pi$ can be derived from $W(A,\pi,p)$ by substitution of $\pi^\land$ for $p$ and applying modus ponens (see \eqref{wreduct}). 

(c) $\Rightarrow$ (a). $A$ can be derived from $A^\pi$ by substitution of $\zero$ for $p$. Since $p$ does not occur in $A$, the obtained formula is equivalent to $A$. 

(a) $\Rightarrow$ (b). Suppose $\var \not\models W(A,\pi,p)$ and let us demonstrate that $\var \not\models A$. 

Indeed, if  $\var \not\models W(A,\pi,p)$, there is a Heyting algebra $\Alg{A} \in \var$ and a valuation $\nu$ in $\Alg{A}$ such that
\begin{equation}
\nu(p \to \pi^\land) = \one \text{ and } \nu(A^p) \neq \one. \label{eq_thwajsb1}
\end{equation}

Let us consider a Brouwerian subreduct $\Alg{B}$ of algebra $\Alg{A}$ generated by elements $\nu(q), q \in \pi$ and $\nu(p)$. From \eqref{eq_thwajsb1} and Proposition \ref{leastel} it follows that $\nu$ maps $p$ to the least element of $\Alg{B}$. Let us consider $\Alg{B}^\zero$ and note that $\Alg{B}$ and $\Alg{B}^\zero$ have the same universes. It is not hard to see that $\Alg{B}^\zero \models A$. Observe that $\Alg{B}^\zero \in^+ \var$ and recall that $\var$ is a B-saturated variety. Hence, $\Alg{B}^\zero \in \var$. Since $\Alg{B}$ and $\Alg{B}^\zero$ have the same universes, we can view $\nu$ as a valuation in $\Alg{B}^\zero$ and it is not hard to see that $\nu(p) = \zero$. Thus, $\nu$ is a valuation refuting $A$ in $\Alg{B}^\zero$, that is, $\var \not\models A$.
\end{proof}

\subsection{Positively Defined Superintuitionistic Logics.} \label{PosDefined} In this sections we consider si-logics corresponding to B-saturated varieties.

\begin{definition}
We say that an si-logic $\LogL$ is \textit{positively defined} (\textit{p-defined} for short) if $\LogL$ can be defined by $\IPC$ extended by a set (not necessarily finite) of positive formulas. 
\end{definition}

Our goal is to prove the following theorem.

\begin{theorem} \label{thposdef} An superintuitionistic logic is positively defined if and only if the corresponding variety is B-saturated.  
\end{theorem}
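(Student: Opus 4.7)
The plan is to prove the two directions separately: the forward direction follows from the fact that positive formulas are insensitive to $\zero$ and their validity transfers through Brouwerian embeddings, and the backward direction is an application of the Wajsberg reduction Theorem \ref{thwajsb}.

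For $(\Rightarrow)$, suppose $\LogL = \Int + \Gamma$ with $\Gamma \subseteq \Frm^+$ and let $\var \bydef \var_\LogL$. I take any Heyting algebra $\Alg{A}$ with $\Alg{A} \in^+ \var$; by definition this means there exist $\Alg{B} \in \var$ and a Brouwerian embedding $\Alg{A}^+ \emb \Alg{B}^+$. For each positive $A \in \Gamma$, validity in $\Alg{B}$ refers only to the reduct operations $\land,\lor,\to,\one$, so $\Alg{B}^+ \models A$; and since Brouwerian embeddings preserve validity of positive formulas (a routine induction on formula complexity, using injectivity of the embedding), $\Alg{A}^+ \models A$, hence $\Alg{A} \models A$. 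Coupled with the automatic $\Alg{A} \models \Int$ (valid in every Heyting algebra), this yields $\Alg{A} \models \LogL$, i.e., $\Alg{A} \in \var$, so $\var$ is B-saturated.

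For $(\Leftarrow)$, assume $\var \bydef \var_\LogL$ is B-saturated and set $\Gamma \bydef \LogL^+$. The inclusion $\Int + \Gamma \subseteq \LogL$ is immediate since $\Gamma \subseteq \LogL$. For the other direction, let $A \in \LogL$, pick a variable $p \notin \pi(A)$, and put $\pi \bydef \pi(A) \cup \{p\}$. Since $\var \models A$ and $\var$ is B-saturated, Theorem \ref{thwajsb} yields $\var \models W(A,\pi,p)$, so $W(A,\pi,p) \in \LogL$; as $W(A,\pi,p) = (p \to \pi^\land) \to A^p$ contains no $\zero$, it belongs to $\Gamma$ and hence to $\Int + \Gamma$. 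Substituting $\zero$ for $p$ turns this into the $\Int+\Gamma$-theorem $(\zero \to \pi^\land) \to A$ (using $p \notin \pi(A)$, so that $A^p[\zero/p] = A$), and modus ponens with the $\Int$-theorem $\zero \to \pi^\land$ delivers $A \in \Int + \Gamma$. Theorem \ref{thwajsb} does the essential algebraic work, so no serious obstacle arises; the only points of care are choosing $p$ fresh so that $A^p[\zero/p]$ really recovers $A$, and confirming that Brouwerian embeddings preserve validity of positive formulas in the forward direction.
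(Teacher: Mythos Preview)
Your proof is correct and, for the $(\Leftarrow)$ direction, uses essentially the same argument as the paper: the paper recasts the statement algebraically as Theorem~\ref{thstur} and invokes the reduct $A^\pi$ from Theorem~\ref{thwajsb}(c) rather than $W(A,\pi,p)$ from Theorem~\ref{thwajsb}(b), but this is immaterial since the two are interderivable. The $(\Rightarrow)$ direction is in fact not written out in the paper's proof of Theorem~\ref{thstur} at all, so your explicit verification that validity of positive axioms transfers along Brouwerian embeddings supplies what the paper tacitly left to the reader; one cosmetic point is that after substituting $\zero$ for $p$ the antecedent becomes $\zero \to (\pi^\land[\zero/p])$ rather than $\zero \to \pi^\land$, but this is harmless since $\zero \to C$ is an $\Int$-theorem for every $C$.
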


It is clear that a logic $\LogL$ is p-defined if and only if its corresponding variety $\var_\LogL$ can be defined over $\Heyt$ by a set of positive formlas. Thus, Theorem \ref{thposdef} is equivalent to the following theorem.

\begin{theorem} \label{thstur} A variety $\var \subseteq \Heyt$ is B-saturated if and only if $\var$ can be defined over $\Heyt$ by a set of positive formulas.
\end{theorem}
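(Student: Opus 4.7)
The plan is to establish both implications using the Wajsberg reduction from Theorem~\ref{thwajsb}: positive definability will imply B-saturation by a direct Brouwerian-subreduct argument, and B-saturation will imply positive definability by replacing each defining axiom of $\var$ with its positive Wajsberg reduct.

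For the easy direction, suppose $\var = \set{\Alg{A} \in \Heyt}{\Alg{A} \models \Gamma}$ for some set $\Gamma$ of positive formulas, and take an arbitrary Heyting algebra $\Alg{A}$ with $\Alg{A} \in^+ \var$, so that $\Alg{A}^+ \Bemb \Alg{C}^+$ for some $\Alg{C} \in \var$. Since positive formulas involve only the Brouwerian signature, validity of $\Gamma$ in $\Alg{C}$ depends only on $\Alg{C}^+$, giving $\Alg{C}^+ \models \Gamma$; this is inherited by the Brouwerian subalgebra $\Alg{A}^+$, so $\Alg{A}^+ \models \Gamma$, which coincides with $\Alg{A} \models \Gamma$. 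Hence $\Alg{A} \in \var$, and $\var$ is B-saturated.

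For the hard direction, assume $\var$ is B-saturated and fix any set $\Gamma_0$ of formulas defining $\var$ over $\Heyt$. For each $A \in \Gamma_0$ choose a fresh variable $p_A \notin \pi(A)$, set $\pi_A \bydef \pi(A) \cup \{p_A\}$, and form the positive formula $W(A, \pi_A, p_A)$; let $\Gamma \bydef \set{W(A, \pi_A, p_A)}{A \in \Gamma_0}$. The implication (a)$\Rightarrow$(b) of Theorem~\ref{thwajsb}, which is the one place B-saturation is consumed, yields $\var \models \Gamma$. Conversely, for any Heyting algebra $\Alg{A}$ with $\Alg{A} \models \Gamma$ and any $A \in \Gamma_0$, substituting $\pi_A^\land$ for $p_A$ in $W(A, \pi_A, p_A)$ produces a formula whose antecedent evaluates to $\one$ in every Heyting algebra, so $\Alg{A} \models A^{\pi_A}$; substituting $\zero$ for $p_A$ in $A^{\pi_A}$ then collapses each copy of $\pi_A^\land$ to $\zero$, and since $p_A \notin \pi(A)$ the result is equivalent to $A$, whence $\Alg{A} \models A$. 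Thus $\Alg{A} \models \Gamma_0$, i.e.\ $\Alg{A} \in \var$, and $\Gamma$ positively axiomatizes $\var$.

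The main conceptual content has already been absorbed into Theorem~\ref{thwajsb}, so no genuinely new obstacle arises here. B-saturation is consumed exactly once, in invoking (a)$\Rightarrow$(b), which in turn relies on closure under the construction $\Alg{B} \mapsto \Alg{B}^\zero$; the return trip from $W(A, \pi_A, p_A)$ back to $A$ is two elementary substitutions valid in every Heyting algebra and requires no hypothesis on $\var$.
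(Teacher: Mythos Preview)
Your proof is correct and rests on the same key tool as the paper, namely Theorem~\ref{thwajsb}. The differences are minor but worth noting. First, you supply the easy direction (positive definability $\Rightarrow$ B-saturation) explicitly; the paper's proof of Theorem~\ref{thstur} treats only the hard direction. Second, for the hard direction the paper axiomatizes $\var$ by the set $\mathcal{F}$ of \emph{all} positive formulas valid in $\var$ and invokes the reduct $A^\pi$ (condition~(c) of Theorem~\ref{thwajsb}), arguing by contradiction that the variety $\var'$ defined by $\mathcal{F}$ cannot be strictly larger than $\var$. You instead start from an \emph{arbitrary} axiomatization $\Gamma_0$ of $\var$ and replace each axiom $A$ by its Wajsberg reduct $W(A,\pi_A,p_A)$ (condition~(b)), then check directly that the resulting positive set defines the same variety. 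Your route has the mild bonus that it preserves the cardinality of the axiomatization---in particular, a finitely axiomatized B-saturated variety is finitely positively axiomatized---which the paper's argument does not yield without further work.
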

\begin{proof} Let $\var$ be a B-saturated variety. Let us prove that $\var$ can be defined over $\Heyt$ by the set $\mathcal{F}$ of all positive formulas valid in $\var$. 

Indeed, let $\var'$ be a variety defined over $\Heyt$ by $\mathcal{F}$. Clearly, $\var'$ is the greatest variety in which all formulas from $\mathcal{F}$ are valid. So, since every formula from $\mathcal{F}$ is valid in $\var$, we have $\var \subseteq \var'$, and  all we need to prove is $\var' \setminus \var = \varnothing$. 

Indeed, assume to the contrary that $\var' \setminus \var \neq \varnothing$. Then there is a formula $A$, such that 
\begin{equation}
\var \models A \text{, while } \var' \not\models A. \label{eq_ththstur1}
\end{equation}
Suppose that $\pi$ is a finite set of variables such that $\pi(A) \subset \pi$. Due to $\var$ is B-saturated variety, from $\var \models A $, by Theorem \ref{thwajsb}, we conclude $\var \models A^\pi$. Let us observe that $A^\pi$ is a positive formula and, hence, $A^\pi \in \mathcal{F}$. But $\var'$ is defined by formulas $\mathcal{F}$, hence $\var' \models A^\pi$. Note that $\var' \models A^\pi$ entails 
\begin{equation}
\var' \models A,  \label{eq_ththstur2}
\end{equation} 
because $A$ can be derived in $\Int$ from $A^\pi$ by substituting $p$ with $\zero$. And \eqref{eq_ththstur2} contradicts \eqref{eq_ththstur1}.
\end{proof}

Let us note that Theorem \ref{thwajsb} (more precisely the equivalence of (a) and (b)) is a generalization of \cite{Wajsberg_Heyting_1977}[Theorem 1 of \$7]  to p-defined logics. \\

\begin{cor} \label{corunif} Let $\LogP$ be a positive logic, $\LogL \bydef \Int + \LogP$ and $\Gamma$ be a set of positive formulas. Then $\Gamma$ is $\LogP$-unifiable
 if and only if $\Gamma$ is $\LogL$-unifiable.
\end{cor}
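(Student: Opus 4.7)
The plan is to prove the two implications separately, with the forward direction essentially immediate and the reverse direction relying on the Wajsberg reduction combined with the B-saturation of $\var_\LogL$.

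The forward direction is easy: if $\sigma^+ \in \Sigma^+$ is a $\LogP$-unifier of $\Gamma$, then since $\LogP \subseteq \Int + \LogP = \LogL$, the same $\sigma^+$, regarded as an element of $\Sigma$, satisfies $\sigma^+(A) \in \LogL$ for every $A \in \Gamma$, whence $\Gamma$ is $\LogL$-unifiable.

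For the reverse direction I would begin with an $\LogL$-unifier $\sigma \in \Sigma$ and turn it into a positive unifier by Wajsberg reduction. The decisive point is that $\LogL = \Int + \LogP$ is positively defined (its axioms over $\Int$ are the positive formulas of $\LogP$); hence by Theorem \ref{thposdef} the variety $\var_\LogL$ is B-saturated, which is exactly the hypothesis required by Theorem \ref{thwajsb}. Next I would fix a finite set of variables $\pi$ containing $\pi(A)$ and $\pi(\sigma(A))$ for every $A \in \Gamma$, together with a fresh variable $p \notin \bigcup_{A \in \Gamma}\pi(\sigma(A))$, and define a single substitution $\sigma^\pi(q) := (\sigma(q))^\pi$ uniformly on all variables $q$; since $\pi$ is non-empty, $\pi^\land$ is positive and therefore $\sigma^\pi \in \Sigma^+$.

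The remainder is a chain of invocations. For each $A \in \Gamma$, the assumption $\sigma(A) \in \LogL$ reads $\var_\LogL \models \sigma(A)$; the equivalence (a)$\Leftrightarrow$(c) of Theorem \ref{thwajsb}, applied to $\sigma(A)$ with our $\pi$ and $p$, delivers $(\sigma(A))^\pi \in \LogL$. Proposition \ref{prsubs} identifies $(\sigma(A))^\pi$ with $\sigma^\pi(A)$, so $\sigma^\pi(A) \in \LogL$; and since $\sigma^\pi(A)$ is positive (a positive substitution applied to a positive formula), Corollary \ref{corposf} upgrades this to $\sigma^\pi(A) \in \LogL \cap \Frm^+ = \LogL^+ = \LogP$. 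Hence $\sigma^\pi$ is the sought positive $\LogP$-unifier of $\Gamma$. The main subtlety is conceptual rather than computational: one must notice that $\Int + \LogP$ is automatically p-defined, so that B-saturation of $\var_\LogL$ is at hand, and then pick $\pi$ uniformly for all $A \in \Gamma$, which is legitimate since $\Gamma$ is finite.
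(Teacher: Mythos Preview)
Your proof is correct and follows essentially the same route as the paper's: both directions match, and for the nontrivial implication you invoke the same chain of results---$\LogL = \Int + \LogP$ is p-defined, hence $\var_\LogL$ is B-saturated (Theorem~\ref{thposdef}), then Theorem~\ref{thwajsb} gives $(\sigma(A))^\pi \in \LogL$, Proposition~\ref{prsubs} identifies this with $\sigma^\pi(A)$, and Corollary~\ref{corposf} brings it down to $\LogP$. Your choice of $\pi$ (containing all $\pi(\sigma(A))$ plus a fresh variable) is in fact stated more carefully than in the paper, where the corresponding sentence only mentions the variables of the $A_i$; your version is what is actually needed to apply Theorem~\ref{thwajsb} to $\sigma(A)$.
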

\begin{proof} If $\Gamma$ is $\LogP$-unifiable, then any $\LogP$-unifier of $\Gamma$ is at the same time a $\LogL$-unifier. Hence, $\Gamma$ is $\LogL$-unifiable.

Conversely, suppose that $\sigma$ is a $\LogL$-unifier of $\Gamma$ and $\Gamma \bydef \{A_i, i <n\}$. Thus $\vdash_\LogL \sigma(A_i)$ for all $i<n$. Let $\pi$ be set of variables strongly containing a set of all variables occurring in all formulas $A_i, i <n$. Logic $\LogL$ is positively defined, hence, by Theorem \ref{thstur}, variety $\var_\LogL$ is B-saturated and we can apply Theorem \ref{thwajsb} and conclude that $\vdash_\LogL \sigma(A_i)^\pi$ for all $i <n$. Recall that all formulas $A_i$ are positive and, by Proposition \ref{prsubs}, there is a positive substitution $\sigma^\pi$ such that $\sigma(A_i)^\pi = \sigma^\pi(A), i < n$. 

by Corollary \ref{corposf}, 
\end{proof}

In \cite{Wronski_1973_Pos} Wro{\'n}ski observed that the cardinality of $\Ext{\Int^+}$ is that of continuum. From Corollary \ref{cordist} it follows that the cardinality of the set of p-defined logics is not less than cardinality of $\Ext{\Int}^+$. Thus, the following holds.

\begin{cor}\label{corcont} There is continuum many p-defined logics and, hence, there is continuum many p-saturated varieties of Heyting algebras.
\end{cor}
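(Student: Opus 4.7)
The plan is to combine Wroński's cardinality result cited in the paragraph just before the corollary with the injectivity statement of Corollary \ref{cordist} and the bijective correspondence of Theorem \ref{thposdef} (equivalently Theorem \ref{thstur}).

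First, I would invoke Wroński's theorem to fix continuum many pairwise distinct positive logics $\{\LogP_\alpha : \alpha < 2^{\aleph_0}\} \subseteq \Ext{\Int^+}$. For each such $\LogP_\alpha$, form the si-logic $\LogL_\alpha \bydef \Int + \LogP_\alpha$. Since $\LogP_\alpha$ is, by definition, a set of positive formulas, $\LogL_\alpha$ is obtained from $\IPC$ by adjoining a set of positive formulas, so $\LogL_\alpha$ is p-defined in the sense of the definition in Section \ref{PosDefined}.

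Next, I would apply Corollary \ref{cordist}: for $\alpha \neq \beta$ we have $\LogP_\alpha \neq \LogP_\beta$, and the corollary gives $\Int + \LogP_\alpha \neq \Int + \LogP_\beta$, i.e.\ $\LogL_\alpha \neq \LogL_\beta$. Hence the $2^{\aleph_0}$ logics $\LogL_\alpha$ are pairwise distinct, giving at least continuum many p-defined si-logics. The reverse inequality is automatic since there are only continuum many formulas and hence at most $2^{\aleph_0}$ si-logics in total, so the number of p-defined si-logics is exactly continuum.

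Finally, to transfer this count to B-saturated varieties, I would invoke Theorem \ref{thposdef}: the map $\LogL \mapsto \var_\LogL$ is a bijection between si-logics and varieties of Heyting algebras, and it restricts to a bijection between p-defined si-logics and B-saturated subvarieties of $\Heyt$. Applying this bijection to the continuum-sized family $\{\LogL_\alpha\}$ produces continuum many pairwise distinct B-saturated subvarieties of $\Heyt$, completing the proof. There is no real obstacle here; the only thing to check carefully is that Corollary \ref{cordist} indeed makes the assignment $\LogP \mapsto \Int + \LogP$ injective, but that is exactly what \eqref{eqcordist} states.
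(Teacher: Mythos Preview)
Your proposal is correct and follows essentially the same route as the paper: invoke Wro{\'n}ski's result that $\Ext{\Int^+}$ has cardinality continuum, use Corollary~\ref{cordist} to see that $\LogP \mapsto \Int + \LogP$ is injective into the p-defined si-logics, and then pass to B-saturated varieties via Theorem~\ref{thposdef}. You add a little more detail (the trivial upper bound and the explicit appeal to the logic--variety bijection), but the argument is the same.
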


\begin{example}  Recall from \cite{Jankov_Calculus_1968} that all si-logics between $\Int$ and $\Int + \neg p \lor \neg\neg p$ have the same positive fragment . Hence, besides $\Int$, neither of these logics is p-defined, and, consequently,
besides $\Heyt$ neither extension of $\KC$ is B-saturated.
\end{example}

\section{Admissibility of Rules in Positive Logics}

The goal of this section is to prove the following theorem.

\begin{theorem} \label{thmadm} Let $\LogP$ be a positive logic, $\ruleR \bydef \Gamma/\Delta$ be a positive m-rule and $\LogL \bydef \Int + \LogP$. Then 
\[
\Padm \ruleR  \text{ if and only if } \Ladm \ruleR.
\]
\end{theorem}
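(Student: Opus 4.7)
The plan is to prove the two implications of the equivalence separately. The forward direction $\Ladm \ruleR \Rightarrow \Padm \ruleR$ is almost immediate from Corollary \ref{corposf}: any positive $\LogP$-unifier $\sigma^+$ of $\Gamma$ may be regarded as an ordinary substitution in $\Sigma$, and since $\LogP \subseteq \LogL$ it remains a $\LogL$-unifier of $\Gamma$. Admissibility in $\LogL$ then delivers some $B \in \Delta$ with $\sigma^+(B) \in \LogL$; but $\sigma^+(B)$ is positive, and Corollary \ref{corposf} identifies $\LogP$ with $(\Int + \LogP)^+ = \LogL \cap \Frm^+$, so $\sigma^+(B) \in \LogP$, establishing $\Padm \ruleR$.

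The reverse direction $\Padm \ruleR \Rightarrow \Ladm \ruleR$ is the main content, and the Wajsberg reduction from Section \ref{WajsbergTh} is the right tool. Given an arbitrary $\LogL$-unifier $\sigma \in \Sigma$ of $\Gamma = \{A_1, \ldots, A_m\}$, I would fix a finite set of variables $\pi$ strictly containing the union of all variables occurring in the formulas $\sigma(A_i)$ for $A_i \in \Gamma$ and $\sigma(B_j)$ for $B_j \in \Delta$, so that some fresh variable lies in $\pi$ outside each of these formulas. Because $\LogL = \Int + \LogP$ is positively defined, its variety $\var_\LogL$ is B-saturated by Theorem \ref{thstur}, and Theorem \ref{thwajsb} yields the crucial equivalence $\sigma(A_i) \in \LogL \iff \sigma(A_i)^\pi \in \LogL$. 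Hence each $\sigma(A_i)^\pi$ belongs to $\LogL$; being positive, it lies in $\LogP$ by Corollary \ref{corposf}. Now define a single positive substitution $\sigma^\pi \in \Sigma^+$ by $\sigma^\pi(p) \bydef \sigma(p)^\pi$ for every variable $p$. Proposition \ref{prsubs} then guarantees, uniformly, that $\sigma^\pi(A) = \sigma(A)^\pi$ for every positive formula $A$ built from these variables, so $\sigma^\pi(A_i) \in \LogP$ for each $i$. Thus $\sigma^\pi$ is a positive $\LogP$-unifier of $\Gamma$, and $\Padm \ruleR$ produces some $B \in \Delta$ with $\sigma^\pi(B) \in \LogP$. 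Rewriting $\sigma^\pi(B) = \sigma(B)^\pi$ and using $\LogP \subseteq \LogL$ together with the other direction of Theorem \ref{thwajsb}, we conclude $\sigma(B) \in \LogL$, so $\sigma$ unifies $B$ in $\LogL$.

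The main obstacle is coordinating the three moving parts: the syntactic transformation $A \mapsto A^\pi$ of an arbitrary formula into a positive one, the induced positive substitution $\sigma \mapsto \sigma^\pi$, and the semantic equivalence supplied by Theorem \ref{thwajsb}. Two facts make everything fit. First, $\LogL = \Int + \LogP$ is p-defined by positive axioms (Corollary \ref{corposf}), so Theorem \ref{thstur} gives B-saturation of $\var_\LogL$ and Theorem \ref{thwajsb} applies in both directions - once to push each premise $\sigma(A_i)$ into its positive reduction, and once more to pull the conclusion $\sigma(B)^\pi$ back to $\sigma(B)$. Second, the definition $\sigma^\pi(p) \bydef \sigma(p)^\pi$ is uniform in $p$, which lets Proposition \ref{prsubs} be applied to all formulas of $\Gamma \cup \Delta$ simultaneously rather than one at a time; this uniformity is essential, since $\Padm \ruleR$ must be invoked on a single positive substitution that unifies the entire premise set, and the conclusion it returns must then be translatable back via the same reduction.
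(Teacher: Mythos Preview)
Your proof is correct and follows essentially the same route as the paper: both directions rest on Corollary~\ref{corposf}, and the substantive implication uses B-saturation of $\var_\LogL$ (via Theorem~\ref{thstur}/\ref{thposdef}) together with Theorem~\ref{thwajsb} and Proposition~\ref{prsubs} to convert an arbitrary $\LogL$-unifier $\sigma$ into a positive $\LogP$-unifier $\sigma^\pi$. The only presentational differences are that the paper argues the reverse direction by contrapositive and splits explicitly into the four cases according to whether $\Gamma$ or $\Delta$ is empty, whereas you treat them uniformly; also, your choice of $\pi$ (variables of the $\sigma$-images rather than of the $A_i,B_j$ themselves) is in fact the cleaner one for invoking Theorem~\ref{thwajsb} on the formulas $\sigma(A_i)$ and $\sigma(B_j)$.
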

\begin{proof} Suppose that $\Ladm \ruleR$. Consider four cases:
\begin{itemize}  
\item[(a)] $\Gamma \neq \varnothing$ and $\Delta \neq \varnothing$; 
\item[(b)] $\Gamma \neq \varnothing$ and $\Delta = \varnothing$;
\item[(c)] $\Gamma = \varnothing$ and $\Delta \neq \varnothing$;
\item[(d)] $\Gamma = \varnothing$ and $\Delta = \varnothing$.
\end{itemize}

(a) Suppose that $\Gamma \neq \varnothing$ and $\Delta \neq \varnothing$. Then any substitution $\sigma \in \Sigma$ that $\LogL$-unifies $\Gamma$ at the same time $\LogL$-unifies at least one formula from $\Delta$. Thus, if $\sigma^+$ is a positive substitution that $\LogP$-unifies $\Gamma$, we have $\sigma^+(B) \in \LogL$ for some formula $B \in \Delta$. Recall that $B$ is a positive formula. Therefore, $\sigma^+(B)$ is a positive formula and, by Corollary \ref{corposf}, $\sigma^+(B) \in \LogP$ if and only if $\sigma^+(B) \in (\Int + \LogP)^+ \subseteq \LogL$.
Hence, if $\sigma^+$ $\LogP$-unifies $B$, that is, m-rule $\ruleR$ is admissible for $\LogP$. 

(b) If $\Gamma \neq \varnothing$ and $\Delta = \varnothing$, then $\LogL$-admissibility of $\ruleR$ means that neither substitution $\LogL$-unifies $\Gamma$, which, in turn means that neither positive substitution $\LogP$-unifies $\Gamma$.   

(c) If $\Gamma = \varnothing$ and $\Delta \neq \varnothing$, then $\LogL$-admissibility of $\ruleR$ means that at least one formula $B$ from $\Delta$ is in $\LogL$. Due to $B$ is positive, $B \in \LogL^+$ and, by Corollary \ref{corposf}, $ \LogL^+ = \LogP$, i.e. $B \in \LogP$. Hence, $\ruleR$ is $\LogP$-admissible.

(d) If $\Gamma = \varnothing$ and $\Delta = \varnothing$, then $\LogL$-admissibility means that $\LogL$ is inconsistent, that is, $\LogL = \Frm$, therefore, $\LogP = \LogL^+ = \Frm^+$, and $\ruleR$ is $\LogP$-admissible.\\

Conversely, suppose that $\NLadm \ruleR$. We need to show that $\NPadm \ruleR$. We again consider four above cases.

(a) If $\NLadm \ruleR$, there is a substitution $\sigma$ that $\LogL$-unifies $\Gamma \bydef \{A_i, i < n \}$ but not $\LogL$-unifies any formula from $\Delta \bydef \{ B_j, j < m \}$. Let $\pi$ be a set of variables such that $\bigcup_{i<n}\pi(A_i) \cup \bigcup_{j<m}\pi(B_j) \subset \pi$, i.e. $\pi$ contains all variables from all formulas from $\ruleR$ and at least one extra variable. Logic $\Int + \LogP$ is positively defined, hence, by Theorem \ref{thposdef}, the corresponding variety $\var$ is B-saturated. Recall that by Theorem \ref{thwajsb}, for any positive formula $C$, $\var \models C$ if and only if $\var \models C^\pi$. Hence, for all $i < n$
\[
\var \models \sigma(A_i) \text{ if and only if } \var \models \sigma(A_i)^\pi
\]
and, by Proposition \ref{prsubs}, there is a positive substitution $\sigma^\pi$ such that
\[
\var \models \sigma(A_i)^\pi \text{ if and only if } \var \models \sigma^\pi(A_i). 
\]
Thus $\sigma^\pi$ is a $\LogP$-unifier for $\Gamma$. A similar argument shows that $\sigma^\pi$ does not $\LogP$-unify any formula $B_j, j <m$, which means that $\ruleR$ is not admissible for $\LogP$, that is, $\NPadm \ruleR$.

(b) If $\Delta = \varnothing$ and $\NLadm \ruleR$, it simply means that $\Gamma$ is $\LogL$-unifiable. We can repeat the argument from case (a) and conclude that $\Gamma$ is $P$-unifiable, hence, rule $\Gamma/\varnothing$ is not admissible for $\LogP$.  

(c) If $\NLadm \varnothing/\Delta$, then $\Delta \cap \LogL = \varnothing$ and, due to all formulas from $\Delta$ are positive, $\Delta \cap \LogL^+ = \varnothing$. Recall that $\LogL^+ = \LogP$, hence, $\Delta \cap \LogP = \varnothing$, which means $\NPadm \varnothing/\Delta$.

(d) $\NLadm \varnothing/\varnothing$ means that $\LogL$ is consistent. Hence, $p \notin \LogL$, where $p$ is a variable. But $p$ is a positive formula, i.e. $\LogL^+ \neq \Frm^+$. Thus, we have $\LogP = \LogL^+ \neq \Frm^+$, which means that $\LogP$ is consistent and $\NPadm \varnothing/\varnothing$. 
\end{proof}

\section{Derivability of Rules in Positive Logics}

\begin{definition} Let $\Rules$ be a set of m-rules, $\ruleR$ be an m-rule and $\var$ be a variety. Then $\ruleR$ (semantically) $\var$-\textit{follows from }$\Rules$ (in symbols $\Rules \models_\var \ruleR$) if
\[
\Alg{A} \models \Rules \text{ entails } \Alg{A} \models \ruleR  \text{ for every } \Alg{A} \in \var. 
\]
\end{definition}

In a natural way, the above definition can be extended to the logics.

\begin{definition} Let $\Rules$ be a set of m-rules, $\ruleR$ be an m-rule and $\LogL$ be a logic. Then $\ruleR$ (semantically) $\LogL$-\textit{follows from }$\Rules$ (in symbols $\Rules \models_\LogL \ruleR$) if
$\Rules \models_{\var_\LogL} \ruleR$, that is, $\ruleR$ follows from $\Rules$ where $\var$ w.r.t. variety corresponding to $\LogL$.
\end{definition}

\begin{theorem} \label{thfollow} Let $\LogP$ be a positive logic and $\LogL \bydef \Int + \LogP$. If $\Rules$ is a set of positive m-rules and $\ruleR$ is a positive m-rule, then
\[
\Rules \models_\LogP \ruleR \text{ if and only if } \Rules \models_{\LogL} \ruleR.
\]
\end{theorem}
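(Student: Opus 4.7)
The plan is to prove the two implications separately. The $(\Rightarrow)$ direction is straightforward: pass to Brouwerian reducts. The $(\Leftarrow)$ direction is the substantive one, and I would prove it by contraposition, constructing a Heyting witness from a Brouwerian one via adjunction of a least element.

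For $(\Rightarrow)$, take any $\Alg{A} \in \var_\LogL$ with $\Alg{A} \models \Rules$. The Brouwerian reduct $\Alg{A}^+$ lies in $\var_\LogP$ because $\LogP \subseteq \LogL$ and positive formulas are interpreted identically in $\Alg{A}$ and $\Alg{A}^+$; for the same reason $\Alg{A}^+ \models \Rules$. The hypothesis $\Rules \models_\LogP \ruleR$ then gives $\Alg{A}^+ \models \ruleR$, which lifts back to $\Alg{A} \models \ruleR$ since $\ruleR$ is positive.

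For $(\Leftarrow)$, I would argue by contraposition: starting from $\Alg{B} \in \var_\LogP$ with $\Alg{B} \models \Rules$ together with a valuation $\nu$ refuting $\ruleR = \Gamma/\Delta$, I would produce a Heyting algebra in $\var_\LogL$ that also validates $\Rules$ but refutes $\ruleR$. Let $\pi$ be the finite set of variables appearing in $\ruleR$ and let $\tilde{\Alg{B}}$ be the Brouwerian subalgebra of $\Alg{B}$ generated by $\{\nu(p) : p \in \pi\}$. Since $\tilde{\Alg{B}}$ is finitely generated, Proposition \ref{leastel} supplies it with a least element, and hence $\tilde{\Alg{B}}^\zero$ is a Heyting algebra with the same underlying set. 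I would then verify three things in turn: (i) $\tilde{\Alg{B}}^\zero \in \var_\LogL$, which reduces to $\tilde{\Alg{B}}^\zero \models \LogP$, itself immediate because $\LogP$ is positive and $\tilde{\Alg{B}} \in \var_\LogP$; (ii) $\tilde{\Alg{B}}^\zero \models \Rules$, by the same positivity argument together with $\tilde{\Alg{B}} \models \Rules$ (a valuation in the subalgebra $\tilde{\Alg{B}}$ is also one in $\Alg{B}$ and agrees with it on positive formulas); and (iii) $\nu$, reinterpreted on $\tilde{\Alg{B}}^\zero$, continues to validate $\Gamma$ and to refute every $B \in \Delta$. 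Together these contradict $\Rules \models_\LogL \ruleR$.

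The main delicate point is the book-keeping across the algebras $\Alg{B} \supseteq \tilde{\Alg{B}}$ and $\tilde{\Alg{B}}^\zero$: each passage has to preserve both validity of $\Rules$ and refutation of $\ruleR$. The uniform reason it works is that positive formulas do not mention $\zero$ and use only $\land,\lor,\to,\one$, which remain unchanged when one cuts down to a Brouwerian subalgebra or extends it to its Heyting closure. Beyond Proposition \ref{leastel} and the definition $\LogL = \Int + \LogP$ itself, no deeper fact about $\var_\LogL$ (not even B-saturation) seems to be required.
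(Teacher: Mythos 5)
Your proof is correct and follows essentially the same route as the paper: pass to the Brouwerian reduct for one direction, and for the other reduce to a finitely generated Brouwerian algebra, adjoin (or recognize) the least element via Proposition \ref{leastel}, and check that membership in $\var_\LogL$, validity of $\Rules$, and refutation of $\ruleR$ all survive because everything in sight is positive. The only cosmetic difference is that you make explicit the ``without loss of generality finitely generated'' step that the paper states in one line, and you correctly observe that B-saturation plays no role here.
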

\begin{proof} Suppose that $\Rules \not\models_\LogL \ruleR$. Then there is a Heyting algebra $\Alg{A}  \in \var_\LogL$ such that $\Alg{A} \models \Rules$ and $\Alg{A} \not\models \ruleR$. Due to $\Alg{A} \in \var_\LogL$, all formulas from $\LogP$ are valid in $\Alg{A}$ and, hence, $\Alg{A}^+ \in \var_\LogP$. Since all rules that we consider are positive, $\Alg{A}^+ \models \Rules$ and $\Alg{A}^+ \not\models \ruleR$. Thus, $\Rules \not\models_\LogP \ruleR$.  

Conversely, suppose that $\Rules \not\models_\LogP \ruleR$. Then there is a Brouwerian algebra $\Alg{B} \in \var_\LogP$ such that $\Alg{B} \models \Rules$ and $\Alg{B} \not\models \ruleR$. Since $\ruleR$ contains only finite number of variables, without loosing generality we can assume that $\Alg{B}$ is finitely generated. Hence, $\Alg{B}$ has the least element and $\Alg{B}$ can be viewed as a Heyting algebra that we denote $\Alg{B}^\zero$. Due to $\Alg{B} \in \var_\LogP$, all formulas from $\LogP$ are valid in $\Alg{B}$, and, hence, all formulas from $\LogP$ are valid in $\Alg{B}^\zero$. Thus, $\Alg{B}^\zero \in \var_\LogL$. Now, recall that $\Rules$ and $\ruleR$ are positive rules, therefore, $\Alg{B}^\zero \models \Rules$ and $\Alg{B}^\zero \not\models \ruleR$, and this means that $\Rules \not\models_\LogL \ruleR$. 
\end{proof}

The above theorem has two useful corollaries. First, we recall the notion of independence of a set of m-rules.

\begin{definition} Let $\LogL$ be a logic. A set of m-rules $\Rules$ is called $\LogL$\textit{-independent} if no rule $\ruleR \in \Rules$ $\LogL$-follows from $\Rules \setminus \{\ruleR\}$. 
\end{definition}

\begin{cor} \label{corindep} Let $\LogP$ be a positive logic and $\LogL \bydef \Int + \LogP$. Then a set $\Rules$ of positive m-rules is $\LogP$-independent if and only if $\Rules$ is $\LogL$-independent.
\end{cor}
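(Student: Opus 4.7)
The plan is to reduce the corollary directly to Theorem \ref{thfollow}, rule by rule. First I would unfold the definition of independence: the set $\Rules$ fails to be $\LogL$-independent precisely when there exists some $\ruleR \in \Rules$ such that $\Rules \setminus \{\ruleR\} \models_\LogL \ruleR$, and analogously for $\LogP$-independence. So it suffices to show, for each fixed $\ruleR \in \Rules$, the equivalence
\[
\Rules \setminus \{\ruleR\} \models_\LogP \ruleR \;\Longleftrightarrow\; \Rules \setminus \{\ruleR\} \models_\LogL \ruleR,
\]
after which taking negations and quantifying existentially over $\ruleR \in \Rules$ finishes the argument.

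Next I would verify that Theorem \ref{thfollow} applies in this situation. Since $\Rules$ consists of positive m-rules, the subset $\Rules \setminus \{\ruleR\}$ also consists of positive m-rules, and the distinguished rule $\ruleR$ is positive by assumption. Therefore the hypothesis of Theorem \ref{thfollow} is met, and the displayed equivalence above is precisely an instance of that theorem applied to the set $\Rules \setminus \{\ruleR\}$ of positive m-rules and the positive m-rule $\ruleR$, with $\LogL = \Int + \LogP$.

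There is no substantive obstacle here; the whole argument is a quantifier-level reformulation of Theorem \ref{thfollow}. The only small point worth flagging in the write-up is that the equivalence needs to be stated uniformly in $\ruleR$ before negating, so that ``there exists a rule in $\Rules$ that $\LogL$-follows from the rest'' and ``there exists a rule in $\Rules$ that $\LogP$-follows from the rest'' become equivalent. Once that is done, the corollary follows in one line.
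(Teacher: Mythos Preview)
Your proposal is correct and matches the paper's intended argument: the paper states the corollary without proof, treating it as immediate from Theorem~\ref{thfollow} and the definition of $\LogL$-independence, which is exactly the unfolding you describe.
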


\begin{definition} Let $\LogL$ be a logic. A set of admissible m-rules $\Rules$ is called a \textit{basis of $\LogL$-admissible rules} if any admissible for $\LogL$ rule $\LogL$-follows from $\Rules$. A basis $\Rules$ is called independent if $\Rules$ is $\LogL$-independent set.
\end{definition}

\begin{cor} \label{corbasis} Let $\LogP$ be a positive logic and $\LogL \bydef \Int + \LogP$. If a set of positive m-rules $\Rules$ forms an $\LogL$-basis of $\LogL$-admissible rules, then $\Rules$ forms a $\LogP$-basis of $\LogP$-admissible rules. If $\Rules$ is a $\LogL$ independent basis, the $\Rules$ is $\LogP$-independent as well. 

If $\Rules$ is a basis of $\LogP$-admissible m-rules, then any admissible for $\LogL$ positive m-rule $\LogL$-follows from $\Rules$.
\end{cor}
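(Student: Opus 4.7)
The plan is to derive all three assertions as direct consequences of the previously established equivalences: Theorem \ref{thmadm} (admissibility of positive rules coincides between $\LogP$ and $\LogL$), Theorem \ref{thfollow} (semantic consequence of positive rules coincides between $\LogP$ and $\LogL$), and Corollary \ref{corindep} (independence transfers between the two logics for positive rule sets). No new algebraic machinery is needed; the task is purely bookkeeping, matching the "positive" and "$\LogL$" sides of these equivalences.

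For the first claim, I would start from the hypothesis that the positive set $\Rules$ is an $\LogL$-basis of $\LogL$-admissible rules. First I would observe that every rule in $\Rules$, being positive and $\LogL$-admissible, is $\LogP$-admissible by Theorem \ref{thmadm}. Then, given an arbitrary $\LogP$-admissible positive m-rule $\ruleR$, Theorem \ref{thmadm} lifts $\ruleR$ to an $\LogL$-admissible rule, so by the basis property $\Rules \models_\LogL \ruleR$. Since both $\Rules$ and $\ruleR$ are positive, Theorem \ref{thfollow} yields $\Rules \models_\LogP \ruleR$, proving that $\Rules$ is a $\LogP$-basis.

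The second claim is immediate from Corollary \ref{corindep}: the set $\Rules$ consists of positive m-rules, and a rule $\ruleR \in \Rules$ $\LogL$-follows from $\Rules \setminus \{\ruleR\}$ if and only if it $\LogP$-follows from $\Rules \setminus \{\ruleR\}$, so $\LogL$-independence and $\LogP$-independence coincide. For the third claim, if $\Rules$ is a $\LogP$-basis and $\ruleR$ is any positive $\LogL$-admissible m-rule, then by Theorem \ref{thmadm}, $\ruleR$ is $\LogP$-admissible, hence $\Rules \models_\LogP \ruleR$ by the basis property, and finally $\Rules \models_\LogL \ruleR$ by Theorem \ref{thfollow}.

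The only potential pitfall is to keep the positive/general distinction straight: the $\LogL$-basis hypothesis is formally stronger in the sense that $\Rules$ must derive \emph{every} $\LogL$-admissible rule, whereas for the conclusion and for the third claim one only needs the positive $\LogL$-admissible rules. I would explicitly note that Theorems \ref{thmadm} and \ref{thfollow} both require positivity of the rules involved, and therefore the transfer between $\LogL$ and $\LogP$ bases is only asserted for positive rule sets—which is precisely the setting of the statement. With this care, the proof reduces to three short chains of applications of the prior results, and no substantive obstacle remains.
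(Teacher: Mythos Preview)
Your proposal is correct and follows exactly the approach the paper intends: the corollary is stated without proof precisely because it is an immediate consequence of Theorem~\ref{thmadm}, Theorem~\ref{thfollow}, and Corollary~\ref{corindep}, chained together in the way you describe. Your care in noting that all rules involved are positive (so that the transfer theorems apply) is appropriate and matches the implicit reasoning.
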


\begin{example} An independent basis for $\Int$ consisting of positive m-rules had been constructed in \cite{Jerabek_Independent_2008}[Theorem 3.12]. This set of m-rules forms an independent basis for admissible $\Int^+$-rules.
\end{example}


\bibliographystyle{acm}

\begin{thebibliography}{10}

\bibitem{Belnap_et_Strengthening_1963}
{\sc Belnap, Jr., N.~D., Leblanc, H., and Thomason, R.~H.}
\newblock On not strengthening intuitionistic logic.
\newblock {\em Notre Dame J. Formal Logic 4\/} (1963), 313--320.

\bibitem{Birkhoff}
{\sc Birkhoff, G.}
\newblock {\em Lattice {T}heory}.
\newblock American Mathematical Society Colloquium Publications, vol. 25,
  revised edition. American Mathematical Society, New York, N. Y., 1948.

\bibitem{Cintula_Metcalfe_Admissible_2010}
{\sc Cintula, P., and Metcalfe, G.}
\newblock Admissible rules in the implication-negation fragment of
  intuitionistic logic.
\newblock {\em Ann. Pure Appl. Logic 162}, 2 (2010), 162--171.

\bibitem{Citkin1977}
{\sc Citkin, A.}
\newblock On admissible rules of intuitionistic propositional logic.
\newblock {\em Math. USSR, Sb. 31\/} (1977), 279--288.
\newblock (A.~Tsitkin).

\bibitem{Citkin_Positive_1988}
{\sc Citkin, A.}
\newblock On admissibility of rules in positive logic.
\newblock In {\em IX All-Union Conference for Mathematical Logic}. ``Nauka'',
  1988, p.~171.
\newblock (in {R}ussian).

\bibitem{deJongh_Zhao_Positive_2015}
{\sc de~Jongh, D., and Zhao, Z.}
\newblock Positive formulas in intuitionistic and minimal logic.
\newblock In {\em Logic, Language and Computation}, vol.~8984 of {\em Lecture
  Notes in Computer Science}. Springer-Verlag, 2015, pp.~175--189.
\newblock 10th International Tbilisi Symposium on Logic, Language, and
  Computation, TbiLLC 2013, Gudauri, Georgia, September 23-27, 2013. Revised
  Selected Papers.

\bibitem{GorbunovBookE}
{\sc Gorbunov, V.~A.}
\newblock {\em Algebraic theory of quasivarieties}.
\newblock Siberian School of Algebra and Logic. Consultants Bureau, New York,
  1998.
\newblock Translated from the Russian.

\bibitem{Goudsmit_PhD}
{\sc Goudsmit, J.}
\newblock {\em Intuitionistic Rules Admissible Rules of Intermediate Logics}.
\newblock PhD thesis, Utrech University, 2015.

\bibitem{Goudsmit_Iemhoff_Unification_2014}
{\sc Goudsmit, J.~P., and Iemhoff, R.}
\newblock On unification and admissible rules in {G}abbay--de {J}ongh logics.
\newblock {\em Ann. Pure Appl. Logic 165}, 2 (2014), 652--672.

\bibitem{Harrop_Concerning_1960}
{\sc Harrop, R.}
\newblock Concerning formulas of the types {$A\rightarrow B\bigvee
  C,\,A\rightarrow (Ex)B(x)$} in intuitionistic formal systems.
\newblock {\em J. Symb. Logic 25\/} (1960), 27--32.

\bibitem{Horn_Separation_1962}
{\sc Horn, A.}
\newblock The separation theorem of intuitionist propositional calculus.
\newblock {\em J. Symbolic Logic 27\/} (1962), 391--399.

\bibitem{Jankov_Calculus_1968}
{\sc Jankov, V.~A.}
\newblock Calculus of the weak law of the excluded middle.
\newblock {\em Izv. Akad. Nauk SSSR Ser. Mat. 32\/} (1968), 1044--1051.
\newblock English translation in Math. of the USSR-Izvestiya, 2:5, 997â??1004
  (1968).

\bibitem{Jerabek_Independent_2008}
{\sc Je{\v{r}}{\'a}bek, E.}
\newblock Independent bases of admissible rules.
\newblock {\em Log. J. IGPL 16}, 3 (2008), 249--267.

\bibitem{Jerabek_Canonical_2009}
{\sc Je{\v{r}}{\'a}bek, E.}
\newblock Canonical rules.
\newblock {\em J. Symbolic Logic 74}, 4 (2009), 1171--1205.

\bibitem{Kohler_Varieties_1975}
{\sc K{\"o}hler, P.}
\newblock Varieties of {B}rouwerian algebras.
\newblock {\em Mitt. Math. Sem. Giessen}, 116 (1975), iii+83.

\bibitem{Kracht_Review_1999}
{\sc Kracht, M.}
\newblock Book review of \cite{Rybakov_Book}.
\newblock {\em Notre Dame J. Form. Log. 40}, 4 (1999), 578 -- 587.

\bibitem{Kracht_Modal_2007}
{\sc Kracht, M.}
\newblock Modal consequence relations.
\newblock In {\em Handbook of Modal Logic}, P.~Blackburn and et~al., Eds.,
  vol.~3 of {\em Studies in Logic and Practical Reasonong}. Elsevier, 2007,
  ch.~8, pp.~491 -- 545.

\bibitem{Lorenzen_Book_1955}
{\sc Lorenzen, P.}
\newblock {\em Einf\"uhrung in die operative {L}ogik und {M}athematik}.
\newblock Die Grundlehren der mathematischen Wissenschaften in
  Einzeldarstellungen mit besonderer Ber\"ucksichtigung der Anwendungsgebiete,
  Bd. LXXVIII. Springer-Verlag, Berlin-G\"ottingen-Heidelberg, 1955.

\bibitem{Lorenzen_Protologik_1956}
{\sc Lorenzen, P.}
\newblock Protologik. {Ein Beitrag zum BegrÃ¼ndungsproblem der Logik}.
\newblock {\em Kant-Studien 47}, 1 -- 4 (Jan 1956), 350 -- 358.
\newblock Translated in P. Lorenzen Constructive Philosophy (Univerisity of
  Massachusettes Press, Amherst, 1987, pp. 59 -- 70).

\bibitem{Mints1971}
{\sc Mints, G.}
\newblock {Derivability of admissible rules.}
\newblock {\em J. Sov. Math. 6\/} (1976), 417--421.
\newblock Translated from Mints, G. E. Derivability of admissible rules.
  (Russian) Investigations in constructive mathematics and mathematical logic,
  V. Zap. Nauchn. Sem. Leningrad. Otdel. Mat. Inst. Steklov. (LOMI) 32 (1972),
  pp. 85 - 89.

\bibitem{Monteiro_Axioms_1955}
{\sc Monteiro, A.}
\newblock Axiomes ind\'ependants pour les alg\`ebres de {B}rouwer.
\newblock {\em Rev. Un. Mat. Argentina 17\/} (1955), 149--160 (1956).

\bibitem{Novikov_Book}
{\sc Novikov, P.~S.}
\newblock {\em Konstruktivnaya matematicheskaya logika s tochki zreniya
  klassicheskoi [Constructive mathematical logic from the point of view of
  classical logic]}.
\newblock Izdat. ``Nauka'', Moscow, 1977.
\newblock With a preface by S. I. Adjan, Matematicheskaya Logika i Osnovaniya
  Matematiki. [Monographs in Mathematical Logic and Foundations of Mathematics]
  (in Russian).

\bibitem{Odintsov_Rybakov_Unification_2013}
{\sc Odintsov, S., and Rybakov, V.}
\newblock Unification and admissible rules for paraconsistent minimal
  {J}ohanssons' logic {$\mathbf{J}$} and positive intuitionistic logic
  {$\mathbf{IPC}^+$}.
\newblock {\em Ann. Pure Appl. Logic 164}, 7-8 (2013), 771--784.

\bibitem{Odintsov_Constructive_2008}
{\sc Odintsov, S.~P.}
\newblock {\em Constructive negations and paraconsistency}, vol.~26 of {\em
  Trends in Logic---Studia Logica Library}.
\newblock Springer, New York, 2008.

\bibitem{Prucnal_On_Structural_1972}
{\sc Prucnal, T.}
\newblock On the structural completeness of some pure implicational
  propositional calculi.
\newblock {\em Studia Logica 30\/} (1972), 45--52.

\bibitem{Rasiowa_Algebraic_1974}
{\sc Rasiowa, H.}
\newblock {\em An algebraic approach to non-classical logics}.
\newblock North-Holland Publishing Co., Amsterdam, 1974.
\newblock Studies in Logic and the Foundations of Mathematics, Vol. 78.

\bibitem{Rybakov_Criterion_Adm_1984}
{\sc Rybakov, V.~V.}
\newblock A criterion for admissibility of rules in the modal system {${\rm
  S}4$} and intuitionistic logic.
\newblock {\em Algebra i Logika 23}, 5 (1984), 546--572, 600.

\bibitem{Rybakov_Bases_Adm_1985}
{\sc Rybakov, V.~V.}
\newblock Bases of admissible rules of the logics {${\rm S}4$} and {${\rm
  Int}$}.
\newblock {\em Algebra i Logika 24}, 1 (1985), 87--107, 123.

\bibitem{Rybakov_Book}
{\sc Rybakov, V.~V.}
\newblock {\em Admissibility of logical inference rules}, vol.~136 of {\em
  Studies in Logic and the Foundations of Mathematics}.
\newblock North-Holland Publishing Co., Amsterdam, 1997.

\bibitem{Verhozina_Intermediate_1978}
{\sc Verhozina, M.}
\newblock Intermediate positive logics.
\newblock In {\em Algorithmic Problems of Algebraic Systems}. Irkutsk State
  University, 1978, pp.~13 -- 25.
\newblock (in {R}ussian).

\bibitem{Wajsberg_Heyting_1977}
{\sc Wajsberg, M.}
\newblock On {A. H}eyting's propositional calculus.
\newblock {\em Waysberg M. Logical works. Warszava\/} (1977), 132--171.
\newblock Translated from: "Untersuchungen uber den Aussagenkalkul von A.
  Heyting" , Wiadomo{\'s}ci matematyczne 46 (1938), pp. 45-101.

\bibitem{Wronski_1973_Pos}
{\sc Wro{\'n}ski, A.}
\newblock On the degree of completeness of positive logic.
\newblock {\em Polish Acad. Sci. Inst. Philos. Sociol. Bull. Sect. Logic 2}, 1
  (1973), 65--70.

\end{thebibliography}

\def\cprime{$'$}

\end{document}